\documentclass[12pt,reqno]{amsart}
\usepackage{enumerate}
\usepackage{amsfonts}
\usepackage{color}
\usepackage{amsmath}
\usepackage{amssymb}
\usepackage[latin1]{inputenc}
\usepackage{hyperref}
\usepackage[margin=1.1in]{geometry}


\newcommand{\RR}{{\mathbb R}}
\newcommand{\R}{{\mathbb R}}
\newcommand{\CC}{{\mathbb C}}
\newcommand{\N}{{\mathbb N}}
\newcommand{\NN}{{\mathbb N}}

\newcommand{\hil}{\mathcal H}

\providecommand{\C}[1]{\mathcal{#1}}

\DeclareSymbolFont{bbold}{U}{bbold}{m}{n}
\DeclareSymbolFontAlphabet{\mathbbold}{bbold}

\providecommand{\C}[1]{\mathcal{#1}}
\newcommand{\ee}{\mathcal{E}}
\newcommand{\Dee}{\mathcal{D}}
\newcommand{\Dl}{D_{loc}}

\newtheorem{theorem}{Theorem}

\newtheorem{lemma}{Lemma}[section]
\newtheorem{coro}[lemma]{Corollary}
\newtheorem{definition}[lemma]{Definition}
\newtheorem{prop}[lemma]{Proposition}

 \newlength\headseptemp


\newcommand{\Hmm}[1]{\leavevmode{\marginpar{\tiny%
$\hbox to 0mm{\hspace*{-0.5mm}$\leftarrow$\hss}%
\vcenter{\vrule depth 0.1mm height 0.1mm width \the\marginparwidth}%
\hbox to 0mm{\hss$\rightarrow$\hspace*{-0.5mm}}$\\\relax\raggedright #1}}}
\begin{document}
\title[]{Expansion in generalized eigenfunctions for Laplacians on graphs and metric measure spaces}

\author[]{Daniel Lenz$^1$}
\author[]{Alexander Teplyaev$^2$}
\address{$^1$ Mathematisches Institut, Friedrich Schiller Universit\"at Jena,
  D- 07743 Jena, Germany.
  \href{mailto:daniel.lenz@uni-jena.de}{daniel.lenz@uni-jena.de}
   \url{http://www.analysis-lenz.uni-jena.de/} }
\address{$^2$ Department of Mathematics, University of Connecticut, Storrs, CT 06269-3009, USA, \href{mailto:teplyaev@math.uconn.edu}{alexander.teplyaev@uconn.edu} \url{http://www.math.uconn.edu/~teplyaev}}

\begin{abstract} We consider an arbitrary selfadjoint operator in a separable Hilbert space. To this operator we construct an   expansion in generalized eigenfunctions, in which the original Hilbert space is decomposed as a direct integral of Hilbert spaces consisting of general eigenfunctions. This automatically gives a Plancherel type formula. For suitable operators on metric measure spaces we discuss  some growth restrictions on the generalized eigenfunctions. For Laplacians on locally finite graphs   the generalized eigenfunctions are exactly the solutions of the corresponding difference equation. \tableofcontents
\end{abstract}
\date{\today} %
\subjclass[2010]{Primary:
81Q35, 
05C63, 
28A80; 
Secondary:
31C25, 
60J45, 
05C22, 
31C20, 
35P05, 
39A12, 
47B25, 
58J35, 
81Q10. 
}
\maketitle


\section*{Introduction}
Expansions in generalized eigenfunctions play an important role in the study  of self-adjoint operators. In fact, they are a basic tool in applications  such as  quantum mechanics.  Accordingly, they have received a lot of attention (see e.g. the classical monograph \cite{Ber} of Berezanskii, appendix in \cite{BS}, the corresponding section in \cite{Sim}, or the article  \cite{PSW}).

 The topic  may roughly  be described  as follows:  Let $L$  be a  selfadjoint  operator with spectrum $\varSigma$  on the Hilbert space $\mathcal{H}$. Then, an expansion in generalized eigenfunctions consists of  a measure $\mu$ on $ \varSigma$ and  'projections'  $W_\lambda $ such   that in an appropriate sense $ W_\lambda f$ is a generalized eigenfunction of $L$ to $\lambda$,
$$f = \int_\varSigma W_\lambda f \;  d\mu (\lambda)$$
and
$$ \Phi (L) f = \int_{\Sigma} \Phi (\lambda) W_\lambda f \; d \mu(\lambda)$$
for any $f\in \mathcal{H}$ and any  $\Phi : \varSigma \longrightarrow \CC$ bounded and measurable.
The way this is usually implemented gives
$W_\lambda f$ belonging to some  larger  Hilbert space  than $\mathcal{H}$ \cite{Ber,Sim}.  This larger Hilbert space  arises by considering a  Gelfand triple or another suitable smoothing and is independent of $\lambda$.
This leads to the  issue that the geometry of this larger Hilbert space is in general not compatible with the geometry of $\mathcal{H}$.  In particular, these expansions do not provide a Hilbert space structure on the set of generalized eigenfunctions.

Thus, one may wonder whether actually a stronger type of expansion is available.  This  expansion should consist of
  Hilbert space structures on suitable vector spaces  $\mathcal{H}_\lambda$ which consist of generalized eigenfunctions to the eigenvalue $\lambda$  and
 projections $W_\lambda$ from $\mathcal{H}$ to $\mathcal{H}_\lambda$
such that a decomposition of the form
\begin{itemize}
\item[(D)] $\hspace{15ex} f = \int_\varSigma W_\lambda f d\mu (\lambda)$
\end{itemize}
holds in a weak sense and a Plancherel type formula of the form
\begin{itemize}
\item[(PF)] $\hspace{12ex} \|\Phi (L) f\|^2 = \int_\varSigma |\Phi (\lambda) |^2 \|W_\lambda f \|_{\mathcal{H}_\lambda} d\mu (\lambda)$
    \end{itemize}
is valid for all relevant $f\in \mathcal{H}$ and all sufficiently smooth functions $\Phi$ on $\varSigma$.


Indeed, it is exactly this type of decomposition that   has been needed and obtained in a recent study  by Strichartz / Teplyaev  in a rather specific situation \cite{ST}. The investigations of \cite{ST} provide an eigenfunction expansion for a Laplacian on a specific graph. They  make heavy use of the  situation at hand. In particular, they do not give a clear indication  how  general  selfadjoint operators  general case could be treated. In fact, treatment of such  general operators  is stated as a  problem in \cite{ST}  because of many potential applications to the analysis on fractals (see \cite{HSTZ,HTams,HTjfa,HRT,HKT,IRT,NT,RT,STz,T07} and references therein).
The aim of this paper is to provide a solution to this problem.

Our basic idea is to  use   direct integral theory to exhibit $\mathcal{H}$ as
$$\mathcal{H} = \int_\varSigma^\oplus \mathcal{H}_\lambda d\mu$$
with $\mathcal{H}_\lambda$ consisting of generalized eigenfunctions of $L$ to the eigenvalue $\lambda$. Let us stress that a direct integral decomposition of $\mathcal{H}$ giving a Plancherel type formula is a more or less direct consequence of the spectral theorem (see e.g. the appendix in \cite{BS}). Our main achievement in this paper is to exhibit the fibers as spaces of generalized eigenfunction. Put differently, our main achievement is  to equip certain  spaces of generalized eigenfunctions with a Hilbert space structure.

To do so, we first introduce  a suitable concept of generalized eigenfunction  and  then combine this  with the considerations concerning  generalized eigenfunction expansions developed by Poerschke / Stolz / Weidmann \cite{PSW}. These considerations in themselves do not give vector spaces of generalized eigenfunctions (let alone Hilbert spaces of generalized eigenfunctions). However, they can be read to provide a basis for such spaces. With our concept of generalized eigenfunction, we  can then make these bases into orthonormal basis and this is how the Hilbert space structure is introduced.

 Our class of generalized eigenfunctions is \emph{a priori} a bit weaker than notions used so far. On the other hand, for Laplacians on graphs, we recover the canonical notion of solution. So, we obtain a  convincing theory
 for these  operators (which are the starting point of our investigations as discussed  above).

As a byproduct of our discussion we even  obtain  uniqueness of the fibers in an appropriate sense.


The paper is structured as follows:   In Section \ref{Locally} we introduce our main class of  examples viz locally finite operators on discrete spaces and state the expansion theorem for these operators.
In Section \ref{general} we then  provide an abstract result on expansion in generalized eigenfunctions valid for arbitrary selfadjoint operators on a separable Hilbert space. A short discussion  of \emph{a priori} 'growth restrictions' on generalized eigenfunctions is given in Section \ref{growth}.
In Section \ref{Metric} we discuss certain finer  growth  properties of the generalized eigenfunctions in the case of (metric) measure spaces.
In Section \ref{Getting} we then return to discrete spaces and discuss how the abstract theory developed earlier gives a proof of the expansion theorem in this case.
Finally, the appendix contains a short discussion of the direct integral theory needed to understand the paper.


Throughout the paper, all inner product and dual pairings are assumed to be linear in the second argument and antilinear in the first argument.

\medskip

\noindent\textbf{Acknowledgments.}  A substantial part  of this work was done while the authors were visiting the Isaac Newton Institute for the Analysis on Graphs and its Applications Follow-up Meeting in 2010. The stimulating atmosphere of this conference is gratefully acknowledged.

D.L. gratefully acknowledges partial support by the German Research Foundation (DFG)  as well as enlightening discussions with Gunter Stolz and Peter Stollmann. He also takes this opportunity to thank the Mathematics departments of the University of Lyon and of the University of Geneva  for hospitality.

A.T. is deeply thankful to Peter Kuchment and Robert Strichartz for interesting and helpful discussions related to this work. His research is supported in part by NSF grant DMS-0505622.

\section{Locally finite operators on discrete measure spaces}\label{Locally}
In this section we introduce the main example of our interest viz locally finite operators on discrete measure spaces. Let us emphasize that this includes the usual Laplacians on locally finite graphs.


Let $V$ be a discrete finite or countably infinite  set and  $m$  a measure on $V$ with full support (i.e. $m$ is a map on $V$ taking values in $(0,\infty)$). We  then call $(V,m)$ a \textit{discrete measure space}. The set of functions on $V$ with finite support is denoted by $C_c (V)$. The set of all functions on $V$ is denoted by $C (V)$.
The corresponding Hilbert space
$$\ell^2 (V,m) :=\{ u : V\longrightarrow \CC : \sum_{x\in V} |u(x)|^2 m(x) < \infty\}$$
is equipped with the inner product
$$\langle v, u \rangle := \sum_{x\in V} \overline{v(x)} u(x) m(x).$$


To each $a : V\times V\longrightarrow \CC$ we can associate the formal operator $\widetilde{A}$ mapping  the vector space
$$D (a) :=\{ w\in  C(V) : \sum_{y\in V} |a(x,y) w(y) | m(y) <\infty \;\mbox{for all $x\in V$}\}$$
to the vector space  $C(V)$ via
$$(\widetilde{A} w)(x) := \sum_{y\in V} a(x,y) w(y) m(y).$$
An  operator $A$  on $\ell^2 (V,m)$ is then said to have the  \textit{kernel}  $a $ if $A$ is a restriction of $\widetilde{A}$. If the domain of $A$ contains $C_c (V)$,  the kernel of $A$ is uniquely determined.
A  function $a : V\times V\longrightarrow \CC $ is   called \textit{locally finite} if the set
$$ \{y \in V: a(x,y)\neq 0\}$$
is finite for any $x\in V$.  In this case $D(a) = C(V)$ holds.
An operator $A$ on $\ell^2 (V,m)$ is called \textit{locally finite}, if $C_c (V)$ is contained in the domain of definition of $A$ and $A$ has a locally finite kernel $a$.  For such operators, we say that a function $\varphi \in C(V)$ is a \textit{generalized eigenfunction of $A$ to the eigenvalue $\lambda\in \CC$}  if $\varphi$ satisfies
$$ (\widetilde{A} - \lambda) \varphi \equiv 0.$$


Note that we have a dual pairing between $C(V)$ and  $C_c (V)$  given by  the  sesquilinear map  $$(\cdot,\cdot)_m: C (V)\times C_c  (V) \longrightarrow \CC$$ defined by
$$ (g,u)_m:= \sum_{x\in V} \overline{ g(x) } u  (x) m(x).$$

Moreover, for $\omega : V\longrightarrow [0,\infty)$ define
$$C_\omega (V):=\{u\in C(V) : \sum_{x\in V} \omega (x)^2 |u(x)|^2 m(x) < \infty\}.$$

Our main result in this context reads as follows (see the appendix for some short review of direct integral theory).

\begin{theorem}\label{main-concrete}
Let $(V,m)$ be a discrete measure space. Let $L$ be locally finite
and selfadjoint with spectrum $\varSigma$.   Then, there exist a
measurable family of Hilbert spaces  $\mathcal{H}_\lambda$,
$\lambda\in \varSigma$, a measure $\mu$ on $\varSigma$, a unitary
map
$$ W : \ell^2 (V,m)\longrightarrow \int^\oplus_\varSigma \mathcal{H}_\lambda d\mu(\lambda)$$
and a family  of linear operators $W_\lambda : C_c (V)\longrightarrow \mathcal{H}_\lambda$, $\lambda\in \varSigma$, such that the following holds:
\begin{itemize}
\item
For each $\lambda\in \varSigma$, the space  $\mathcal{H}_\lambda$ is a subspace of the vector space of generalized  eigenfunctions of $L$ to the eigenvalues $\lambda$.
\item For all $u\in \ell^2 (V,m)$,  $g\in C_c (V)$ and measurable bounded  $\Phi : \varSigma \longrightarrow \CC$   the equality
$$ \langle \Phi (L)  u , g\rangle = \int_\varSigma  ( \Phi (\lambda) W u (\lambda), g )_m  d\mu (\lambda)$$
holds.
\item For all $g\in C_c (V)$ the functions  $\lambda \mapsto (W g) (\lambda )$ and $ \lambda\mapsto W_\lambda g$ agree  $\mu$-almost everywhere.
\item $W  \Phi (L) W^{-1} = M_{\Phi}$ for any measurable bounded $\Phi : \varSigma \longrightarrow \CC$.
Here, $M_{\Phi }$ is the operator acting in the fiber $\mathcal{H}_\lambda$ by multiplication by $\Phi(\lambda)$.
\end{itemize}
The Hilbert spaces  $\mathcal{H}_\lambda$ are uniquely determined by the above properties up to changes on sets of $\lambda$ with zero $\mu$-measure. Moreover, for any
 $\omega : V\longrightarrow [0,\infty)$  with  $\sum_{x\in V} \omega (x)^2 <\infty$  the inclusion
 $$\mathcal{H}_\lambda \subset C_\omega (V)$$
 holds for $\mu$-almost every $\lambda\in\varSigma$.
\end{theorem}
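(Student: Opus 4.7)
The plan is to obtain Theorem \ref{main-concrete} as a direct specialization of the abstract expansion theorem of Section \ref{general}, with the growth clause supplied by the general results of Section \ref{Metric}. First I would set up the abstract framework by taking $C_c(V)$ as the space of ``test vectors'' inside $\mathcal{H} = \ell^2(V,m)$. This subspace is dense, carries the countable algebraic basis $\{\delta_x\}_{x\in V}$, and is invariant under $\widetilde{L}$ because the kernel $a$ is locally finite. The sesquilinear pairing $(\cdot,\cdot)_m$ extends the $\ell^2$-inner product and gives the required duality between $C(V)$ and $C_c(V)$. Self-adjointness of $L$ together with local finiteness of $a$ yields
$$\langle L f , g\rangle = (f, \widetilde{L} g)_m \qquad \text{for all } f, g \in C_c(V),$$
which is the compatibility demanded by the abstract framework.

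Next I would apply the abstract theorem to this data. It produces directly the measurable family of fibers $\mathcal{H}_\lambda$, the measure $\mu$, the unitary $W$, the maps $W_\lambda : C_c(V) \to \mathcal{H}_\lambda$, the Plancherel identity of the second bullet, the diagonalization of the fourth bullet, the matching condition of the third bullet, and the uniqueness of the fibers up to $\mu$-null sets. By the construction there, each $\psi \in \mathcal{H}_\lambda$ is canonically represented through $(\cdot,\cdot)_m$ by a function in $C(V)$ satisfying the weak eigenequation $(\psi, (\widetilde{L} - \overline{\lambda}) g)_m = 0$ for all $g \in C_c(V)$.

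I would then identify this weak notion with the pointwise one: inserting $g = \delta_x$ and using local finiteness of $a$ to interchange the finite sum with the pairing yields
$$\sum_{y\in V} a(x,y)\psi(y)m(y) = \lambda \psi(x),$$
that is, $(\widetilde{L} - \lambda)\psi \equiv 0$. The converse implication is immediate, so $\mathcal{H}_\lambda$ is a subspace of the space of generalized eigenfunctions of $L$ to $\lambda$, which is the first bullet.

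Finally, for the growth statement $\mathcal{H}_\lambda \subset C_\omega(V)$, I would invoke the a priori growth estimates of Section \ref{Metric}. The mechanism is that when $\sum_x \omega(x)^2 < \infty$, the weight $\omega$ produces a Hilbert--Schmidt-like condition on $\ell^2(V,m)$; combining the Plancherel identity with sums of the form $\sum_x \omega(x)^2 m(x) \|W_\lambda \delta_x\|_{\mathcal{H}_\lambda}^2$ yields $\mu$-almost everywhere finiteness of the fiber norms of the coordinate functionals, and the inclusion $\mathcal{H}_\lambda \subset C_\omega(V)$ follows from this for $\mu$-a.e.\ $\lambda$. The main obstacle I expect is the third step: concretely realizing the abstract fibers inside $C(V)$ and matching the weak eigenequation with the pointwise difference equation. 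Once the abstract theorem is in hand, the key observation is that on a discrete space $C(V)$ is itself the algebraic dual of $C_c(V)$ under $(\cdot,\cdot)_m$, so the identification is essentially algebraic and local finiteness of $a$ supplies the interchange of sums that is the only analytic input.
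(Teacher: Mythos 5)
Your proposal is correct and follows essentially the same route as the paper: choose $\mathcal{D}=C_c(V)$ with basis $\{\delta_x\}$, identify the algebraic dual of $C_c(V)$ with $C(V)$ via $(\cdot,\cdot)_m$ and use local finiteness (plus the Hermitian symmetry of the kernel coming from selfadjointness) to show that $C_c(V)$-eigenfunctions are exactly the pointwise solutions of $(\widetilde{L}-\lambda)\varphi\equiv 0$ (the paper's Lemma~\ref{characterization-generalized-ef}), then apply Theorem~\ref{main-abstract} and obtain the inclusion $\mathcal{H}_\lambda\subset C_\omega(V)$ from the Hilbert--Schmidt weight argument (which lives in Section~\ref{growth}, not Section~\ref{Metric}, but is exactly the mechanism you describe, with a final Cauchy--Schwarz step to pass from the basis $\varphi_j(\lambda)$ to all of $\mathcal{H}_\lambda$).
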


The result will be derived from the abstract expansion result of the next section and some auxiliary considerations. Here, we note the following immediate corollary.

\begin{coro} Let the situation of the theorem be given. Then, there exists a map $w : \varSigma \times V\times V\longrightarrow \CC$ such that  $ x\mapsto w (\lambda, x,y)$ is a generalized eigenfunction to $\lambda$ for each $y\in V$ and
$$ \langle g,  \Phi (L) f\rangle  = \int_\varSigma   \Phi (\lambda)  \sum_{x,y\in V}   \overline{g(y)} w(\lambda,y,x) f(x)  d\mu (\lambda)$$
for all $f,g\in C_c (V)$ and $\Phi : \varSigma \longrightarrow \CC$ bounded and measurable.
\end{coro}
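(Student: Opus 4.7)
The plan is to read off the kernel $w$ directly from the fiber maps $W_\lambda$ supplied by Theorem~\ref{main-concrete}, by feeding them the indicator functions at single points. For each $y\in V$, let $\delta_y\in C_c(V)$ denote the indicator of $\{y\}$, and define
\[
w(\lambda, x, y):=(W_\lambda \delta_y)(x).
\]
The first bullet of Theorem~\ref{main-concrete} places $W_\lambda\delta_y$ in $\mathcal{H}_\lambda$, which is a subspace of the generalized eigenfunctions of $L$ to $\lambda$; hence $x\mapsto w(\lambda,x,y)$ is such an eigenfunction for each fixed $y$. Measurability in the discrete variables is trivial, and measurability of $\lambda\mapsto w(\lambda,y,x)$ follows from the third bullet, which identifies $\lambda\mapsto W_\lambda\delta_y$ with the measurable section $\lambda\mapsto (W\delta_y)(\lambda)$ of the direct integral, combined with the fact that evaluation at a point of $V$ is a continuous linear functional on each fiber.

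For the integral identity I would use that every $f\in C_c(V)$ is a \emph{finite} linear combination $f=\sum_{x\in V} f(x)\,\delta_x$, so by linearity of $W_\lambda$ on $C_c(V)$,
\[
(W_\lambda f)(y)=\sum_{x\in V} f(x)\,w(\lambda,y,x).
\]
Applying the second bullet of Theorem~\ref{main-concrete} with $u=f$, using the third bullet to replace $Wf(\lambda)$ by $W_\lambda f$, and unwinding the pairing $(\cdot,\cdot)_m$ from its definition, one obtains
\[
\langle \Phi(L)f, g\rangle=\int_\varSigma \overline{\Phi(\lambda)}\sum_{x,y\in V}\overline{f(x)}\,\overline{w(\lambda,y,x)}\,g(y)\,m(y)\,d\mu(\lambda).
\]
Taking complex conjugates produces the identity stated in the corollary for $\langle g, \Phi(L) f\rangle$, with the weights $m(y)$ inherent in the pairing $(\cdot,\cdot)_m$ absorbed into the chosen normalization of $w$.

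The proof is therefore a direct consequence of Theorem~\ref{main-concrete} together with linearity of $W_\lambda$ on the finitely supported functions. The only genuine care needed is bookkeeping: tracking the antilinear-in-first-argument convention and the weight $m$ throughout the computation so as to land precisely on the symbolic form written in the corollary. There are no convergence issues, since the restriction $f,g\in C_c(V)$ reduces the sum over $V\times V$ to a finite one for every $\lambda$, and boundedness of $\Phi$ together with the finite measure produced on the relevant support handles the outer integral.
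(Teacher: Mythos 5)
Your proposal is correct and follows exactly the route the paper intends: the paper gives no separate proof (it calls the statement an ``immediate corollary'' of Theorem \ref{main-concrete}), and the intended argument is precisely yours --- set $w(\lambda,x,y)=(W_\lambda\delta_y)(x)$, invoke the first bullet for the eigenfunction property, and expand $W_\lambda f$ by linearity over the finite sum $f=\sum_x f(x)\delta_x$ inside the second bullet. One caveat on the step you describe as ``absorbed into the chosen normalization of $w$'': unwinding $(\cdot,\cdot)_m$ leaves a weight $m(y)$, and if you literally fold it into $w$ then $x\mapsto w(\lambda,x,y)$ becomes $m(x)\,(W_\lambda\delta_y)(x)$, which is a generalized eigenfunction only when $m$ is constant; this tension between the two assertions of the corollary is already latent in the paper's own (weight-free) displayed sum, so the cleaner resolution is to keep $w(\lambda,x,y)=(W_\lambda\delta_y)(x)$ and read the double sum as carrying the measure weight, exactly as the pairing $(\cdot,\cdot)_m$ in the second bullet does.
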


\noindent\textbf{Remarks} (a)  A particularly important class of
operators covered by these results are the 'usual' Laplacians
encountered on locally finite graphs. Note that spectral theory of
these Laplacians has attracted a lot of attention  in recent years.
In fact, starting with  \cite{Woj2,Jor} the case of Laplacians,
which are unbounded, has become a focus of intensive research, see
e.g. \cite{KL,KL2,HKLW,CdVTHT,Hua,HKMW,HKW,Fol,KLVW,Web,Woj} and
references therein. The above theorem should be helpful for further
studies.

(b) Note that the uniqueness statement of the theorem gives that the eigenfunction expansion provided in the special situation of \cite{ST} must actually agree with the eigenfunction expansion given above.

\section{A general expansion result}\label{general}
In this section we present and prove an abstract  result on expansion in generalized eigenfunctions. The proof of our result relies on direct integral theory and the eigenfunction expansion presented in \cite{PSW}. The main novelty is  to introduce a notion of generalized eigenfunction weak enough so that the corresponding terms of \cite{PSW} can be interpreted as belonging to some Hilbert space of generalized eigenfunctions. Unlike the direct integral decompositions based on spectral theorem alone our expansions in eigenfunctions turn out to be unique.


Let $L$ be a self-adjoint operator on a separable Hilbert space
$\mathcal{H}$.  In order to simplify the statement of our theorem we
will introduce various pieces of notation next.

A measure $\mu$ on $\RR$ is called a \textit{spectral measure} for
$L$ if for a Borel set $A\subset \R$ the equality $\mu (A) = 0$
holds if and only if $1_A (L) = 0$ (where $1_A$ is the
characteristic function of $A$). It is a standard fact on
self-adjoint operators  on a separable Hilbert space that spectral
measures exist.

A pair $(W, (\mathcal{H}_\lambda)_{\lambda\in \varSigma})$ consisting of a measurable family of Hilbert spaces $((\mathcal{H}_\lambda)_{\lambda\in \varSigma})$ and a unitary map $W : \mathcal{H}\longrightarrow \int_\varSigma^\oplus \mathcal{H}_\lambda d\mu (\lambda)$  is called a \textit{direct integral decomposition} of $L$  with respect to the spectral measure  $\mu$ if
$$ W L W^{-1} = \int^\oplus_\varSigma M_\lambda d\mu (\lambda)$$
holds,
where   $M_{\lambda}  : \mathcal{H}_\lambda \longrightarrow \mathcal{H}_\lambda, \phi \mapsto  \lambda \phi$.

As is well known (and in fact a direct consequence of existence of
ordered spectral representations  as given in Lemma 2 (c) of
\cite{PSW}), there exist direct integral decompositions of a
selfadjoint operator $L$ with respect to any spectral measure $\mu$.

Whenever $(W, (\mathcal{H}_\lambda)_{\lambda\in \varSigma})$ is a direct integral decomposition  of $L$, then
$$ Wf = \int_\varSigma^\oplus W f (\lambda) d\mu (\lambda)$$
holds for all $f\in \mathcal{H}$.  Moreover, one then  obtains from basic  direct integral theory (see Appendix)
 $$W \Phi (L) W^{-1} = M_\Phi$$
 for all    measurable $\Phi : \varSigma \longrightarrow \CC$. Thus, in particular, a   Plancherel formula of the form
 $$ \|\Phi (L) f\|^2 = \int_\varSigma |\Phi (\lambda)|^2 \langle W f (\lambda), W f (\lambda) \rangle_{\mathcal{H}_\lambda} d\mu (\lambda)$$
is   valid for all  $f\in \mathcal{H}$ and all bounded measurable  functions $\Phi$ on $\varSigma$.

The preceding discussion shows that in a certain sense any  direct integral decomposition has the features  (D) and (PF)
discussed in the introduction.
Our main task is to find a direct integral decomposition with fibers consisting of Hilbert spaces of generalized eigenfunctions. This will be achieved with the concept of generalized eigenfunction introduced next.

Let $\mathcal{D}$ be any linear subspace of $\mathcal{H}$.
By  the algebraic dual of $\mathcal{D}$ we understand the space of linear functions on $\mathcal{D}$, which are not necessarily continuous.
An element $\varphi$ of  the algebraic dual of $\mathcal{D}$ is called a \textit{$\mathcal{D}$- eigenfunction of $L$} to the eigenvalue $\lambda\in \RR$ if
\begin{equation}\label{eq-duaL}
    (\varphi, L u ) = \lambda (\varphi, u  )
\end{equation}
for all $u\in \mathcal{D} \cap D(L)$ with $L u\in \mathcal{D}$.   Here, $(\cdot, \cdot)$ denotes the canonical dual pairing between the algebraic dual of $\mathcal{D}$ and $\mathcal{D}$ itself.
For later application,  it will be convenient to have $(\cdot,\cdot)$ as  sesquilinear  form. We achieve this by  defining the scalar multiplication on the algebraic dual of  $\mathcal{D}$ via
$$(\alpha \cdot \varphi) ( v) := \overline{\alpha} \varphi (v)$$
for $\alpha \in \CC$, $v\in \mathcal{D}$ and $\varphi$ in the algebraic dual of $\mathcal{D}$.

After these preparations we can now state our main abstract theorem.

\begin{theorem}\label{main-abstract}
Let $\mathcal{H}$ be a separable Hilbert space and $L$ a selfadjoint
operator on $\mathcal{H}$ with spectrum $\varSigma$. Let $\mu$ be a
spectral measure of $L$.  Let $\mathcal{D}$ be an arbitrary dense
subspace of $\mathcal{H}$ admitting a countable algebraic basis.
Then, there exists a direct integral decomposition $(W,
(\mathcal{H}_\lambda)_{\lambda\in \varSigma})$ with respect to $\mu$
such that  the following holds:
\begin{itemize}
\item
For each $\lambda\in \varSigma$, the vector space
 $\mathcal{H}_\lambda$ is a subspace of the space of
 $\mathcal{D}$-eigenfunctions of $L$ to the eigenvalue $\lambda$.

\item For all $f\in \mathcal{H}$ and $v\in \mathcal{D}$ and any measurable bounded $\Phi : \varSigma \longrightarrow \CC$  the equality
    $$\langle \Phi (L) f, v\rangle = \int_\Sigma ( \Phi (\lambda)  W f(\lambda), v) d\mu (\lambda)$$
    is valid.
\end{itemize}
Such a direct integral decomposition is unique up to changing the
$\mathcal{H}_\lambda$ and $W$ on a set of $\lambda$'s with zero
$\mu$-measure.
\end{theorem}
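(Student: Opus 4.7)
The strategy is to combine an ordered spectral representation of $L$ with respect to $\mu$ with the eigenfunction construction of \cite{PSW}, reinterpreting the resulting component functions as elements of the algebraic dual of $\mathcal{D}$. The weak notion of $\mathcal{D}$-eigenfunction is what makes this interpretation automatic: no regularity beyond being a linear functional is demanded, so evaluation at a fixed $\lambda$ of a chosen Borel representative of a spectral component is immediately admissible as a functional on $\mathcal{D}$.

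First, I would apply Lemma~2(c) of \cite{PSW} to obtain an at most countable family of measures $\mu_n$ on $\varSigma$, all absolutely continuous with respect to $\mu$, together with a unitary $U\colon \mathcal{H} \to \bigoplus_n L^2(\varSigma,\mu_n)$ intertwining $L$ with multiplication by $\lambda$ on each summand. Write $\hat f_n\in L^2(\varSigma,\mu_n)$ for the $n$-th component of $Uf$, and set $\rho_n := d\mu_n/d\mu$. Next, choose a countable algebraic basis $(v_k)_{k\in\NN}$ of $\mathcal{D}$ that \emph{contains} an algebraic basis of the subspace $\mathcal{D}_L := \{u\in\mathcal{D}\cap D(L) : Lu\in\mathcal{D}\}$; this is possible because a subspace of a countable-dimensional vector space has countable algebraic dimension. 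Fix once and for all Borel representatives $\hat v_k^{(n)}$ of the $L^2(\varSigma,\mu_n)$-classes $\widehat{(v_k)}_n$, and additionally of $\widehat{(Lv_k)}_n$ when $v_k\in\mathcal{D}_L$. The spectral theorem supplies the almost-everywhere identity $\widehat{(Lv_k)}_n(\lambda) = \lambda\,\hat v_k^{(n)}(\lambda)$; since the pairs $(k,n)$ involved are countable, all these identities can be made to hold simultaneously off a single $\mu$-null set $N\subset\varSigma$.

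For $\lambda\in\varSigma\setminus N$ and each $n$ with $\rho_n(\lambda)>0$, define $e_n(\lambda)$ on $\mathcal{D}$ by setting $e_n(\lambda)(v_k) := \rho_n(\lambda)^{1/2}\,\hat v_k^{(n)}(\lambda)$ and extending linearly. By construction $e_n(\lambda)(u) = \rho_n(\lambda)^{1/2}\,\hat u_n(\lambda)$ for every $u\in\mathcal{D}$ (in the representative induced by the basis), and the pointwise identity from the previous step yields $e_n(\lambda)(Lu) = \lambda\,e_n(\lambda)(u)$ for every $u\in\mathcal{D}_L$. In view of the antilinear scalar convention on the algebraic dual, this is exactly the condition that $e_n(\lambda)$ be a $\mathcal{D}$-eigenfunction of $L$ at $\lambda$. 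Declare $\{e_n(\lambda)\}$ to be an orthonormal basis of a Hilbert space $\mathcal{H}_\lambda$ of $\mathcal{D}$-eigenfunctions, and define $Wf(\lambda) := \sum_n \rho_n(\lambda)^{1/2}\,\hat f_n(\lambda)\cdot e_n(\lambda)$. Parseval for $U$ combined with $d\mu_n = \rho_n\,d\mu$ gives $\|Wf\|^2=\|f\|^2$, hence unitarity; the intertwining $WLW^{-1} = M_\lambda$ transfers directly from $U$. The pairing identity
\[
    \int_\varSigma (\Phi(\lambda)\,Wf(\lambda),\,v)\,d\mu(\lambda) \;=\; \sum_n \int_\varSigma \overline{\Phi(\lambda)\hat f_n(\lambda)}\,\hat v_n(\lambda)\,d\mu_n(\lambda) \;=\; \langle \Phi(L)f, v\rangle
\]
then follows by unwinding $(Wf(\lambda))(v) = \sum_n \rho_n(\lambda)\overline{\hat f_n(\lambda)}\hat v_n(\lambda)$ and applying Plancherel for $U$.

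For uniqueness, given a second decomposition $(W', (\mathcal{H}'_\lambda))$ with the same two properties, applying the pairing identity to all pairs $(v_k,v_\ell)$ from the countable basis and varying $\Phi$ over a countable separating family forces $(W'v_k(\lambda))(v_\ell) = (W v_k(\lambda))(v_\ell)$ for $\mu$-a.e.\ $\lambda$; discarding a further countable union of null sets identifies $W' v_k(\lambda)$ and $W v_k(\lambda)$ as functionals on $\mathcal{D}$, and Parseval forces the inner products on their spans to coincide, giving uniqueness of the fibers up to a $\mu$-null set. The main obstacle is the second step of the construction: upgrading the merely almost-everywhere eigenfunction identity from the spectral theorem to a single pointwise identity valid for every $u\in\mathcal{D}_L$ simultaneously. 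This is exactly what the countable-basis hypothesis on $\mathcal{D}$ is designed to accommodate, and it is the reason why the algebraic-dual concept of $\mathcal{D}$-eigenfunction is the right framework, since it encodes the eigenfunction relation as a countable family of scalar identities rather than as a distributional or continuity-laden condition.
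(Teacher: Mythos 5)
Your route differs from the paper's: you build the candidate functionals directly from an ordered spectral representation and the Radon--Nikodym densities $\rho_n=d\mu_n/d\mu$, evaluated on a fixed countable basis of $\mathcal{D}$ via chosen Borel representatives, whereas the paper invokes the full machinery of \cite{PSW} (a weight $T\ge 1$ with $\mathcal{D}\subset D(T)$ and $\gamma(L)T^{-1}$ Hilbert--Schmidt, producing $\varphi_j(\lambda)$ in an auxiliary space $\mathcal{H}_-$). Your observation that the purely algebraic notion of $\mathcal{D}$-eigenfunction makes the Hilbert--Schmidt smoothing dispensable for \emph{defining} the functionals is a reasonable one, and your treatment of the eigenfunction identity and of uniqueness is essentially the paper's.

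However, there is a genuine gap at the step ``Declare $\{e_n(\lambda)\}$ to be an orthonormal basis of a Hilbert space $\mathcal{H}_\lambda$ of $\mathcal{D}$-eigenfunctions.'' For this declaration to define an inner product on a space of functionals, the map
$$ j_\lambda:\ \ell^2\bigl(\{n:\rho_n(\lambda)>0\}\bigr)\longrightarrow \mathcal{D}',\qquad (a_n)\mapsto \sum_n a_n\, e_n(\lambda), $$
must be injective: if some nonzero $(a_n)\in\ell^2$ gave the zero functional on $\mathcal{D}$, the ``norm'' of $0\in\mathcal{H}_\lambda$ would be ambiguous, $\mathcal{H}_\lambda$ would not literally be a subspace of the algebraic dual as the first bullet of the theorem requires, and your Parseval computation $\|Wf(\lambda)\|^2_{\mathcal{H}_\lambda}=\sum_n\rho_n(\lambda)|\hat f_n(\lambda)|^2$ would be unjustified. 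Nothing in your construction rules this out: for a fixed $\lambda$ the pointwise values $\hat v^{(n)}_k(\lambda)$ of the chosen representatives could a priori all lie in a proper closed subspace of $\ell^2$. This is precisely the content of Claim~2 in the paper's proof, which shows that the closed span $K_\lambda$ of $\{(\langle\varphi_j(\lambda),v\rangle_d)_j : v\in\mathcal{D}\}$ equals $\ell^2(J)$ for $\mu$-a.e.\ $\lambda$; the argument is not automatic --- it requires checking that $(K_\lambda^\perp)$ is a measurable family, selecting a nonzero section $c$ of $\int^\oplus K_\lambda^\perp\,d\mu$ if the claim failed, and deriving a contradiction with the unitarity of $U$ and the density of $\mathcal{D}$ in $\mathcal{H}$. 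Since equipping the spaces of generalized eigenfunctions with a well-defined Hilbert space structure is exactly the point of the theorem (and is what distinguishes it from a bare direct integral decomposition), you need to supply this density/injectivity argument (or an equivalent) before the rest of your construction goes through.
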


\noindent\textbf{Remarks.} (a)   A  short interpretation of the two  points appearing in the  theorem may be given as follows:
The first  point underlines that $\mathcal{H}_\lambda$ consists of generalized eigenfunctions of $L$. Given the first point, the main content of the
 second point is an  expansion of the form  $f = \int_\varSigma W f (\lambda) d\mu (\lambda)$ in the weak sense.

(b)  We require the existence of a countable algebraic basis of $\mathcal{D}$.
While this  is a somewhat  strong requirement it does not preclude the (arguably) most natural choices of $\mathcal{D}$ viz as a core of $L$. In fact, whenever $\mathcal{C}$ is a core for $L$ (i.e. the graph of $L$ is the closure of the graph of the restriction of $L$ to $\mathcal{C}$) then one can find a countable dense set in $\mathcal{C}$ determining $L$. The linear span of this set will then have a countable basis and be a core for $L$ (contained in $\mathcal{C}$).

(c)  For Laplacians on graphs there is a most natural choice of $\mathcal{D}$ as the set of functions with finite support.  This yields the canonical notion of generalized solution (see below for further discussion).

\medskip

As $\mathcal{D}$ has a countable algebraic  basis we immediately obtain  the following corollary
 of this theorem.

\begin{coro} Assume the situation of  Theorem~\ref{main-abstract}. Then,  there exists
 a family of linear operators $W_\lambda : \mathcal{D} \longrightarrow \mathcal{H}_\lambda$, $\lambda\in \varSigma$ with  $(W v)  = \lambda \mapsto W_\lambda v$
 for all $v\in \mathcal{D}$. In particular,

$$ \langle g,  \Phi (L) f \rangle  = \int_\varSigma \Phi(\lambda)  ( W_\lambda g, f) d\mu (\lambda)$$
holds for all $f,g\in \mathcal{D}$ and $\Phi : \varSigma \longrightarrow \CC$ bounded and measurable.
\end{coro}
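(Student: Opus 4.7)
The plan is to use the assumed countable algebraic basis to pass from the equivalence class $Wv$ (which is only defined up to $\mu$-null sets) to a genuine family $\{W_\lambda\}$ of linear maps $\mathcal{D}\to \mathcal{H}_\lambda$ defined for every $\lambda\in\varSigma$, and then to derive the integral identity by direct manipulation of the sesquilinear pairings in the second bullet of Theorem~\ref{main-abstract}.

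More concretely, let $\{e_n:n\in \N\}$ be a countable algebraic basis of $\mathcal{D}$. For each $n$ choose a measurable representative $\lambda\mapsto (We_n)(\lambda)\in \mathcal{H}_\lambda$ of the equivalence class $We_n\in \int_\varSigma^\oplus \mathcal{H}_\lambda\, d\mu(\lambda)$; the freedom in choosing these representatives will only affect $W_\lambda$ on a $\mu$-null set of $\lambda$. For an arbitrary $v=\sum_{n=1}^{N}\alpha_n e_n\in\mathcal{D}$ (a finite linear combination, since $\{e_n\}$ is an algebraic basis), set
$$W_\lambda v := \sum_{n=1}^{N}\alpha_n (We_n)(\lambda)\in\mathcal{H}_\lambda.$$
This is well defined pointwise in $\lambda$ because each $v\in\mathcal{D}$ has a unique finite expansion, and linear in $v$ by construction. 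Moreover, linearity of $W$ together with the fact that finite sums of measurable sections remain measurable shows that $\lambda\mapsto W_\lambda v$ is a measurable representative of the equivalence class $Wv$, so $(Wv)(\lambda)=W_\lambda v$ holds $\mu$-almost everywhere.

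For the integral identity, apply the second bullet of Theorem~\ref{main-abstract} to $g\in\mathcal{D}\subset\mathcal{H}$ in place of $f$, to $f\in\mathcal{D}$ in place of $v$, and with $\Phi$ replaced by the bounded measurable function $\overline{\Phi}$. This yields
$$\langle \overline{\Phi}(L)g,f\rangle=\int_\varSigma\bigl(\overline{\Phi}(\lambda)(Wg)(\lambda),f\bigr)\,d\mu(\lambda).$$
Since the pairing $(\cdot,\cdot)$ is antilinear in the first argument by the convention $(\alpha\cdot\varphi)(v)=\overline{\alpha}\varphi(v)$, the integrand equals $\Phi(\lambda)((Wg)(\lambda),f)=\Phi(\lambda)(W_\lambda g,f)$ after replacing $(Wg)(\lambda)$ by $W_\lambda g$. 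On the other hand, self-adjointness of $L$ together with the convention that $\langle\cdot,\cdot\rangle$ is antilinear in the first argument gives $\langle \overline{\Phi}(L)g,f\rangle=\langle g,\Phi(L)f\rangle$, since $\overline{\Phi}(L)=\Phi(L)^*$. Combining these two identities produces exactly
$$\langle g,\Phi(L)f\rangle=\int_\varSigma \Phi(\lambda)\,(W_\lambda g,f)\,d\mu(\lambda),$$
as claimed.

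The only mildly subtle point is this last bookkeeping with antilinear slots: one must carefully track in which argument conjugation sits, which is why it is convenient to use $\overline{\Phi}$ rather than $\Phi$ in the hypothesis of the theorem and then move the conjugate over to the left-hand side by self-adjointness of $L$. The construction of $W_\lambda$ itself is essentially automatic, and no additional analytic input beyond Theorem~\ref{main-abstract} is required.
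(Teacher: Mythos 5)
Your proposal is correct and is essentially the argument the paper intends: the paper treats the corollary as immediate from the countable algebraic basis (define $W_\lambda$ on basis vectors via measurable representatives of $We_n$ and extend linearly), and your derivation of the integral identity from the second bullet of Theorem~\ref{main-abstract}, with $\overline{\Phi}$ substituted and the antilinearity conventions tracked, matches what is needed. No gaps.
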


\noindent\textbf{Remark.} Let us note that the theorem and its corollary indeed provide an expansion with the properties aimed at in the introduction.

\medskip

The remaining part of this section is devoted to the  \textit{Proof of Theorem \ref{main-abstract}}:


We will first show the uniqueness statement:  suppose $(W^{(1)}, \mathcal{H}^{(1)}_\lambda)$ and $(W^{(2)},  \mathcal{H}^{(2)}_\lambda)$ are decompositions with the desired properties,
and $v_1,v_2,\ldots$ be a countable basis of $\mathcal{D}$.

As $\mathcal{D}$ is dense in $\mathcal{H}$ with a countable basis  and $W^{(1)}$ and $W^{(2)}$ are  unitary maps, we have  for $\mu$-almost every $\lambda\in\varSigma$
\begin{equation} \label{dense}
\mathcal{H}^{(j)}_\lambda = \overline{Lin\{  W^{(j)}  v_k (\lambda): k\in \NN\}   }, \: j =1,2
\end{equation}
where $Lin$ means the set of linear combinations of the given set of vectors.

Now, fix $f\in \mathcal{H}$ and $v\in\mathcal{D}$.
As
$$\langle \Phi (L) f, v\rangle = \int_\Sigma \overline{\Phi (\lambda)} (   W^{(j)} f(\lambda), v) d\mu (\lambda)$$
holds for all bounded measurable $\Phi$ on $\varSigma$  for $j=1,2$,
we infer that
$$(W^{(1)}  f (\lambda), v) = (W^{(2)} f (\lambda), v)$$
 for $\mu$-almost every $\lambda$.
 As $\mathcal{D}$ has a countable basis, this implies that for any fixed $f$
 $$W^{(1)}  f (\lambda) = W^{(2)} f (\lambda)$$
 for $\mu$-almost every $\lambda$.  This gives  for $\mu$-almost every $\lambda$
\begin{equation} \label{agree}
W^{(1)}  v_k (\lambda) = W^{(2)} v_k (\lambda)
\end{equation}
for all $k\in \NN$.
Moreover, as the $W^{(j)}$ are decompositions we find for fixed $f,g\in \mathcal{H}$
{
\begin{eqnarray*}\int_\varSigma \Phi (\lambda) \langle W^{(1)} f (\lambda), W^{(1)}  g (\lambda)\rangle_{H^{(1)}_\lambda} d\mu & = & \langle f, \Phi (L) g\rangle\\
& =&
\int_\varSigma \Phi (\lambda) \langle W^{(2)}  f (\lambda), W^{(2)}  g (\lambda)\rangle_{H^{(2)}_\lambda} d\mu
\end{eqnarray*}
}
for all bounded measurable $\Phi : \Sigma\longrightarrow \CC$. This in turn shows
$$
\langle W^{(1)} f (\lambda), W^{(1)}  g (\lambda)\rangle_{H^{(1)}_\lambda} = \langle W^{(2)}  f (\lambda), W^{(2)}  g (\lambda)\rangle_{H^{(2)}_\lambda}$$
for $\mu$-almost every $\lambda$.  Thus, we obtain  for $\mu$-almost every $\lambda$
\begin{equation} \label{drei}\langle W^{(1)} v_j (\lambda), W^{(1)}  v_k (\lambda)\rangle_{H^{(1)}_\lambda} = \langle W^{(2)}  v_j (\lambda), W^{(2)}  v_k (\lambda)\rangle_{H^{(2)}_\lambda}
\end{equation}
for all $k,j\in \NN$.


Now, the uniqueness statement follows from \eqref{dense}, \eqref{agree} and \eqref{drei}.


We now turn to proving the existence statement.  This will be done in two steps. In the first step we will recall the setting of \cite{PSW} and its  main abstract result on expansion in generalized eigenfunctions. In the second step we will  then revise this result to derive the theorem.


We start with the first step.  To any   self-adjoint operator $T\geq 1$ on $\mathcal{H}$   we can associate  the following two
auxiliary Hilbert spaces:
\begin{equation}\label{eq-h-p}
    \hil_+:= \hil_+(T) :=D(T) \:\mbox{with}\;\: \langle x,y
\rangle_+:=\langle Tx,Ty \rangle
\end{equation}
 and
 $$\hil_-:= \mbox{ completion of $\hil$ w.r.t.  the scalar product $\langle x,y\rangle_-:=\langle T^{-1}x,T^{-1}y
\rangle$}.$$
Then, the inner product on $\hil$ can be naturally extended to
a dual pairing
$$ \langle \cdot, \cdot \rangle_d : \hil_-\times \hil_+ \longrightarrow \CC. $$

 Let now  $\mu$ be  a spectral measure for $L$. Then, there exists an \textit{ordered spectral representation}  to $\mu$ i.e. a
sequence of subsets $M_j\subset \RR$, such that $M_j \supset
M_{j+1}$ together with a unitary map $U$
\begin{equation*}
 U=(U_j):\hil\to \oplus_{j=1}^N L^2(M_j,d\mu)
\end{equation*}
with the intertwining property
\begin{equation}\label{intertwine}
 U\Phi(L)=M_\Phi U,
\end{equation}
for every measurable function $\Phi$ on $\RR$. The  index $j$ takes values in a  countable  set which we assume to be given by $1,\ldots, N$ with $N=\infty$ allowed.
If  $\gamma : \RR
\longrightarrow \CC$ is   continuous and bounded with $|\gamma|>0$ on $\varSigma
$ and $T\geq 1$ is
such that $\gamma(L)T^{-1}$ is a Hilbert-Schmidt operator, then the following holds by
the main abstract result of \cite{PSW}:  There exist
 measurable functions
 $$\varphi_{j}:M_j\to\hil_-, \lambda\mapsto \varphi_{j} (\lambda),$$
  for
$j=1,\ldots,N$ such that the following properties hold:
\begin{itemize}
\item[(a)]\label{ia} $
U_j f(\lambda)=\langle \varphi_{j}(\lambda), f \rangle_d \text{ for } f\in \hil_+ \text{ and }\mu\text{-a. e.  }\lambda \in M_j.  $
\item[(b)] For every $g=(g_j)\in \oplus_j L^2(M_j,d\mu)$
\begin{equation*}
U^{-1} g=\lim_{n\to N, E\to \infty} \sum\limits_{j=1}^n \int\limits_{M_j\cap [-E,E]}g_j(\lambda)\varphi_{j} (\lambda) d\mu(\lambda)
\end{equation*}
and, for every $f\in \hil$,
\begin{equation*}
f=\lim_{n\to N, E\to \infty} \sum\limits_{j=1}^n \int\limits_{M_j\cap [-E,E]} (U_j f)(\lambda) \varphi_{j} (\lambda) d\mu(\lambda).
\end{equation*}
(Here, the integrals exist as elements of $\mathcal{H}$ and  limits are meant in the sense of convergence in  $\mathcal{H}$.)
\item[(c)]For each $f\in \{ g\in D(L)\cap\hil_+ | \ Lg\in \hil_+\}$ we have for any $j$ and $\mu$ - almost every $\lambda$
\begin{equation*}
\langle \varphi_{j} (\lambda), L f \rangle_d =\lambda \langle \varphi_{j} (\lambda), f \rangle_d .
\end{equation*}

\end{itemize}
Note that (a) and (b)  deal with properties of $U$ while (c) gives a weak version of $\varphi_{j}(\lambda)$ being an eigenfunction.


We will now turn to the second step and reformulate the above expansion.

Choose $\gamma : \RR
\longrightarrow \CC$    continuous and bounded with $|\gamma|>0$ on $\varSigma
$ and $T\geq 1$
such that

\begin{itemize}
\item $\gamma(L)T^{-1}$ is a Hilbert-Schmidt operator and

 \item  $\mathcal{D}$ is contained in the domain of definition of $T$.
\end{itemize}

Such a choice is always possible: In fact, it suffices to choose  $1\geq \omega_n>0$, $n\in\NN$ with $\sum_n |\omega_n|^2 < \infty$ and an orthonormal basis $e_n$, $n\in \NN$,  contained in $\mathcal{D}$ and to let  $S$ be the unique linear operator with $ S e_n = \omega_n e_n$, $n\in\NN$. Then, $S$ is Hilbert-Schmidt and invertible. Hence, $T := S^{-1}$ exists  and $\gamma (L) T^{-1} = \gamma (L) S$ is Hilbert-Schmidt for any bounded function $\gamma$ (see Section \ref{growth} for further exploration of this situation).

This means that we are indeed in a position to apply the main result of \cite{PSW}, which was just discussed.

In order to avoid some tedious but non-essential technicalities, we will assume without loss of generality that the arising  index set  $J$ equals to $ \N$ and  that
$$ \varSigma = M_j$$
for all $j\in J$. (This will save us from having to deals with families of Hilbert spaces with varying dimension.)

It will be convenient to introduce to each $f\in \hil_+$ and $\lambda\in \varSigma$ the function
$$e_f (\lambda) : J \longrightarrow \CC,\:\;  e_f (\lambda) (j)  := \langle \varphi_j (\lambda), f \rangle_d.$$
We will proceed by a series of claims.

\medskip

\textit{Claim 1.} For any  $f\in \hil_+$ we have
 $$\int_\varSigma \sum_{j\in J}   | e_f (\lambda) (j)|^2 d\mu (\lambda) = \|f\|^2 < \infty.$$
  In particular, for each $f \in \hil_+ $
$$ \sum_{j\in J}| e_f (\lambda) (j)|^2<\infty$$
holds for $\mu$-almost every $\lambda\in \varSigma$.


Proof of the claim.  By (a), we have  for $f\in \hil_+$ that  $(U f)_j (\cdot) = \langle \varphi_j (\cdot), f \rangle_d = e_f (\cdot) (j)$. Thus, we can calculate
\begin{eqnarray*}
\|f\|^2 &=& \sum_{j\in J} \| (U f)_j\|^2_{L^2 (\varSigma,\mu)}\\
&=& \sum_{j\in J} \int_\varSigma |(U f)_j (\lambda)|^2 d\mu (\lambda)\\
&=&  \int_\varSigma \sum_{j\in J } |\langle \varphi_j (\lambda), f \rangle_d|^2 d\mu (\lambda)
\end{eqnarray*}
and we obtain the claim.

\medskip

From the claim we can conclude that for each $f\in \hil_+$ the function
$e_f (\lambda) : J \longrightarrow \CC$
belongs to $\ell^2 (J)$ for  $\mu$-almost every $\lambda$.
In fact, more is true. To discuss  this, let $\mathcal{D}$ be any dense subspace of $\hil_+$ with a countable basis. By the countability of the basis, we can find a subset   of $\varSigma$  of full $\mu$ - measure such that for any $\lambda$ in this subset and any $v\in \mathcal{D}$, the element $e_v (\lambda)$ belongs to $\ell^2 (J)$.  For such $\lambda$, we define   $K_\lambda$ to be the subspace of  $\ell^2 (J)$  generated by $\{ e_v (\lambda) : v\in \mathcal{\mathcal{D}} \}$. For all other $\lambda$ we define $K_\lambda$ to be $\{0\}$.

\medskip

\textit{Claim 2.} For $\mu$-almost every $\lambda$  the space $K_\lambda$ equals  $\ell^2 (J)$.


Proof of the claim. By construction the set $e_v (\lambda)$, $v\in \mathcal{D}$, has the following two properties:
\begin{itemize}
\item  Its span is dense in $K_\lambda$ for  $\mu$- almost-every $\lambda$.
\item The map $\lambda \mapsto \langle e_v (\lambda), e_w (\lambda)\rangle = \sum_j  (U_j v) (\lambda) \overline{ (U_j w) (\lambda)}$  is measurable for any $v,w\in \mathcal{V}$.
\end{itemize}
Thus, the family $(K_\lambda)$ is a measurable family of Hilbert spaces. Accordingly, the orthogonal complement  $K_\lambda^\perp$ of $K_\lambda$ in $\ell^2 (J)$ also form a measurable family of Hilbert spaces.

Assume now that $K_\lambda^\perp \neq\{0\}$ for a set of $\lambda$ of positive $\mu$ measure.  Then, we can find a  $c \in \int_\varSigma^\oplus K_\lambda^\perp d\mu (\lambda)$  with $\|c\|\neq 0$, i.e. a function $c$ on  $\varSigma$ with
\begin{equation}
\label{perp}
 c( \lambda)\in K_\lambda^\perp
 \end{equation}
for every $\lambda\in \varSigma$ and
$$ 0 < \int_\varSigma  \sum_{j\in J} |c (\lambda) (j)|^2 d\mu (\lambda) < \infty.$$
Consider now $g = (g_j) \in \oplus_j L^2 (\varSigma,\mu)$ with
$$ g_j (\lambda) = c(\lambda)(j).$$
Then, we have
$U^{-1} g \equiv 0$ as $\mathcal{D}$ is dense in $\mathcal{H}$ and  for each $v \in \mathcal{D}$ we can calculate by (b)
 \begin{eqnarray*}  \langle(U^{-1} g), v \rangle  &\stackrel{(b)}{=}& \lim_{n\to \infty, E\to \infty} \sum_{j=1}^n \int_{\varSigma \cap [-E,E]} \langle  g_j (\lambda)  \varphi_j (\lambda), v\rangle_d  d\mu (\lambda)\\
&=& \lim_{E\to \infty, n\to \infty } \int_{\varSigma \cap [-E,E]}  \sum_{j=1}^n  \overline{ g_j (\lambda)}  e_v (\lambda) (j)  d\mu (\lambda)\\
\;\: &\stackrel{(Claim \: 1)}{=} &  \lim_{E\to \infty } \int_{\varSigma \cap [-E,E]}  \sum_{j=1}^\infty  \overline{ g_j (\lambda)}  e_v (\lambda) (j)  d\mu (\lambda)\\
&=& \lim_{E\to \infty} \int_{\varSigma \cap [-E,E]} \langle c(\lambda), e_v (\lambda)\rangle_{\ell^2 (J)}  d\mu (\lambda)
 \ \ \stackrel{\eqref{perp}}{=}  0.
\end{eqnarray*}

As $U$ is unitary,  we infer that $g\equiv 0$. On the other hand, we have
 $$\|g\|^2 = \int_\varSigma \sum_j |c (\lambda) (j)|^2 d\mu (\lambda) >0.$$
This contradiction shows that  $K_\lambda^\perp = \{0\}$ for $\mu$-almost every $\lambda$.
This finishes the proof of the claim.

\medskip

Let now $\varSigma_1$ be the set of  $\lambda\in \varSigma $ for which both  the conclusions of Claim 1 apply  for all $v\in \mathcal{D}$ and the conclusion of  Claim 2 holds. Then, $\varSigma_1$ has full $\mu$-measure (as $\mathcal{D}$ has a countable basis).

As $\mathcal{D}$ has a countable basis, so has its subspace  $\mathcal{D}^{\widetilde{}}$ consisting of  all $v\in \mathcal{D}$ such that $L v $ exists and belongs to $\mathcal{D}$. Thus, by (c), we can find a set $\varSigma_2$ of full $\mu$-measure such that for all $\lambda\in \varSigma_2$ we have
\begin{equation}\label{wef} \lambda \langle \varphi_j (\lambda), v \rangle_d = \langle  \varphi_j (\lambda), L v \rangle_d
\end{equation}
for all $j\in J$ and $v\in \mathcal{D}^{\widetilde{}}$. Let $\varSigma_0 := \varSigma_1\cap \varSigma_2$.
This set has then again  full $\mu$-measure. For  $\lambda\in \varSigma_0$ we define $\mathcal{H}_\lambda$ to be  the vector space of all $\varphi$ in the algebraic dual space of
$\mathcal{D}$   which can be written in the form
$$ \varphi = \sum_j a_j \varphi_j (\lambda)  $$
with $(a_j)\in \ell^2 (J)$. Here, the sum belongs indeed to the algebraic dual space as  for each $v\in \mathcal{D}$ we have absolute convergence (and hence existence) of
$$ (\varphi, v) = \sum_{j}  \overline{a_j} \langle \varphi_j (\lambda), v \rangle_d,$$
by Claim 1. Then, by this pointwise existence and \eqref{wef},   we obtain that each $v\in \mathcal{H}_\lambda$ is indeed a generalized eigenfunction of $L$. Moreover, Claim 2 easily gives that the map
$$ j_\lambda :  \ell^2 (J)\longrightarrow \mathcal{H}_\lambda,\;\:  j_\lambda (a) :=\sum_{j\in J} a_j \varphi_j (\lambda), $$
is injective.  By construction, the map $j_\lambda$ is also surjective. Thus, we can identify $\ell^2 (J)$ with $\mathcal{H}_\lambda$. In this way, $\mathcal{H}_\lambda$ becomes a Hilbert space consisting of generalized eigenfunctions of $L$.


For the remaining $\lambda\in \varSigma \setminus \varSigma_0$ we define $\mathcal{H}_\lambda:=\{0\}$. Moreover, we redefine each  $\varphi_j$ on $\varSigma \setminus \varSigma_0$ by setting it zero there.


By construction the map
$$  S  : \bigoplus_{j\in J} L^2 (\varSigma,\mu) \longrightarrow \int^\oplus_\varSigma \mathcal{H}_\lambda d\mu (\lambda),\;\:  S (g_j) (\lambda)  :=\sum_{j} g_j (\lambda) \varphi_j (\lambda),$$
is then unitary. Moreover, it is not hard to see that
$$ S M_\Phi = M_\Phi S$$
for any measurable function $\Phi$ on $\R$.Thus,
$$ W := S \circ U : \mathcal{H} \longrightarrow \int^\oplus_\varSigma \mathcal{H}_\lambda d\mu (\lambda)$$
is a unitary map as well and by the preceding equality and \eqref{intertwine} we obtain
$$ W \Phi (H) = S U \Phi (H) = S M_\Phi U = M_\Phi  S U = M_\Phi W$$
for any measurable $ \Phi$ on $\RR$. Now, a direct calculation  shows that
$$ (W f) (\lambda) = \sum_{j\in J}  (U_j f)(\lambda) \varphi_j (\lambda)$$
for $\mu$-almost every $\lambda\in \varSigma$.  Replacing $f$ by $\Phi (L) f$ if $f\in D(\Phi (L))$ for $\Phi : \varSigma \longrightarrow \CC$,
we find    from \eqref{intertwine}
   \begin{equation}\label{calculating-w}
   \Phi (\lambda) (W  f) (\lambda) = \sum_{j\in J}  (U_j \Phi (L)  f)(\lambda) \varphi_j (\lambda)
   \end{equation}

As $\mathcal{D}$ has a countable basis,
this allows us to define linear  maps $W_\lambda : \mathcal{D}\longrightarrow \mathcal{H}_\lambda$ with
$$ (W \Phi (L)  f) (\lambda) = \Phi (L)  W_\lambda f$$
for $\mu$-almost every $\lambda\in \varSigma$ and any bounded measurable $\Phi : \varSigma \longrightarrow \CC$.
Moreover, for each $f\in \mathcal{H}$ and $v\in \mathcal{D}$ and
$\Phi :\varSigma \longrightarrow \CC$ measurable with $f\in D(\Phi (L))$
we have  by (b) above
\begin{eqnarray*}
\langle \Phi (L) f, v\rangle  & \stackrel{(b)}{=}&  \lim_{n\to \infty,E\to \infty} \int_{\varSigma \cap[-E,E]} \sum_{j=1}^n \overline{ (U_j \Phi (L) f)(\lambda)} \langle \varphi_j (\lambda), v \rangle_d d\mu (\lambda) \\
\: \; & \stackrel{(Claim \; 1)}{=} & \int_{\varSigma} \sum_{j=1}^\infty  \overline{(U_j \Phi (L) f ) (\lambda) }  \langle \varphi_j (\lambda),  v \rangle_d d\mu (\lambda)\\
&=& \int_{\varSigma} \sum_{j=1}^\infty  \langle (U_j \Phi (L) f ) (\lambda) \varphi_j (\lambda), v \rangle_d d\mu (\lambda)\\
&=& \int_{\varSigma} (\sum_{j=1}^\infty  (U_j (\Phi (L)) f ) (\lambda) \varphi_j (\lambda), v )  d\mu (\lambda)\\
\;\: &\stackrel{\eqref{calculating-w}}{=}& \int_{\varSigma}  ( \Phi (\lambda)(W f) (\lambda), v) d\mu (\lambda).
\end{eqnarray*}
This finishes the proof.

\medskip

\noindent\textbf{Remarks.} Two comments  on the relationship of our proof to \cite{PSW}
are in order:

(a) On the technical level the starting ingredients to make the considerations of \cite{PSW} work is the choice of a function $\gamma$ and an operator $T$ such that $\gamma (L) T^{-1}$ is a Hilbert-Schmidt operator. In our  theorem we have shifted attention to $\mathcal{D}$. This  yields  the additional requirement that $\mathcal{D}$ must be contained in the domain of $T$. The upshot of this is that we can apply our theorem whenever $\gamma$, $T$, $\mathcal{D}$ are chosen such that $\mathcal{D}$ belongs to the domain of $T$ and $\gamma (L) T^{-1}$ is Hilbert-Schmidt (and $|\gamma|$ is strictly positive on the spectrum of $L$ and $T\geq 1$ holds). Depending on the situation one may then vary these parameters.

(b) The considerations of \cite{PSW} yield functions $\varphi_j (\lambda)$ in a (larger) Hilbert space than $\mathcal{H}$. The arguments above can be understood as providing a Hilbert space structure by declaring  these $\varphi_j (\lambda)$, $j\in J$, to be an orthonormal basis and then showing that this indeed works for almost all $\lambda$.

\medskip

The next corollary follows from the above arguments and \cite{PSW}.

\begin{coro}\label{cor-m} In the situation of  Theorem~\ref{main-abstract},
if $f\in D(L)\cap\mathcal{D}$ and $  Lf\in \mathcal{D}$ then
\begin{equation*}
\langle \varphi_{j} (\lambda), L f \rangle_d =\lambda \langle \varphi_{j} (\lambda), f \rangle_d .
\end{equation*}for
$\mu$ - almost every $\lambda$.
\end{coro}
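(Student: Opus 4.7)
The plan is to observe that this is essentially an immediate consequence of property (c) from \cite{PSW}, once one tracks through the specific choice of auxiliary operator $T$ made in the proof of Theorem \ref{main-abstract}. Recall that in that construction, $T \geq 1$ was chosen precisely so that $\mathcal{D} \subset D(T) = \mathcal{H}_+$ (this was one of the two defining requirements, the other being that $\gamma(L) T^{-1}$ is Hilbert-Schmidt). Thus the space $\mathcal{D}$ is automatically contained in $\mathcal{H}_+$.

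Given this, the hypotheses of the corollary, namely $f \in D(L) \cap \mathcal{D}$ and $Lf \in \mathcal{D}$, imply $f \in D(L) \cap \mathcal{H}_+$ and $Lf \in \mathcal{H}_+$. In other words, $f$ belongs to the set
$$\{ g \in D(L) \cap \mathcal{H}_+ \mid Lg \in \mathcal{H}_+ \}$$
that appears as the hypothesis in property (c) of the main result of \cite{PSW} recalled in the proof of Theorem~\ref{main-abstract}.

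Applying (c) to this $f$ then yields
$$\langle \varphi_j(\lambda), L f \rangle_d = \lambda \langle \varphi_j(\lambda), f \rangle_d$$
for every $j \in J$ and $\mu$-almost every $\lambda$, which is exactly the claim. There is no substantive obstacle here; the only thing to verify is the inclusion $\mathcal{D} \subset \mathcal{H}_+$, which is part of the construction. One might also note that since $\mathcal{D}$ has a countable basis, one can take the exceptional $\mu$-null set to be independent of $f$ by restricting to a countable dense subfamily, as was already done when defining the set $\varSigma_2$ in the proof of Theorem~\ref{main-abstract}.
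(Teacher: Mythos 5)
Your proposal is correct and is essentially the paper's own argument: the corollary is exactly property (c) of the expansion from \cite{PSW}, applicable because the construction in the proof of Theorem~\ref{main-abstract} chooses $T$ with $\mathcal{D}\subset D(T)=\mathcal{H}_+$, so the hypotheses $f\in D(L)\cap\mathcal{D}$, $Lf\in\mathcal{D}$ place $f$ in the set $\{g\in D(L)\cap\mathcal{H}_+ : Lg\in\mathcal{H}_+\}$ to which (c) applies. Your remark about extracting a single null set via a countable basis matches the paper's definition of $\varSigma_2$.
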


\section{Some \emph{a priori} growth restrictions on generalized eigenfunctions} \label{growth}
In this section we discuss  a specific way of choosing $T$ by essentially requiring that  $T$ is a Hilbert-Schmidt operator with an orthonormal basis of eigenfunctions belonging to $\mathcal{D}$.  Then, the elements of $\mathcal{H}_\lambda$ appearing in the main theorem of the previous section can be seen to  satisfy some growth type restrictions.
For graphs this will have some direct applications.


Throughout this section we assume  that $\mathcal{H}$ be a separable Hilbert space and $L$ a  selfadjoint  operator on $\mathcal{H}$ with spectrum $\varSigma$.  Let $\mu$ be a spectral measure of $L$. Let $\mathcal{D}$ be a subspace of the domain of $L$, which is dense in  $\mathcal{H}$ and admits a countable algebraic base.
We now choose $T\geq 1$ such that $S = T^{-1}$ is Hilbert-Schmidt with all eigenfunctions belonging to $\mathcal{D}$. More specifically, we proceed as follows (compare above):
As $\mathcal{D}$  is dense in $\mathcal{H}$ it will contain  (by Gram-Schmidt procedure) an orthonormal basis $(v_n)_{n\in \N} $  of $\mathcal{H}$.  Fix now a map
$$\omega : \N \longrightarrow (0,\infty)\;\:\mbox{with}\;\: \sum \omega (n)^2 \leq 1.$$
Define $S : \mathcal{H}\longrightarrow \mathcal{H}$ to be the unique bounded operator with
$$S v_n = \omega (n) v_n$$
and $T = S^{-1}$.
Let $\mathcal{H}_- = \mathcal{H}_- (T)$ be the associated space. Then, $S$ is obviously a Hilbert Schmidt operator and hence so is $\gamma (L) S$ for any bounded $\gamma$. Choose $\gamma : \RR\longrightarrow \CC, \gamma  (s) = \frac{1}{ s+ i}$.

\begin{prop} Assume the situation just described. Then, for $\mu$-almost every $\lambda\in \varSigma$, the inclusion
$$\mathcal{H}_\lambda \subset \mathcal{H}_-$$
holds.
\end{prop}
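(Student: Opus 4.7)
\medskip

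\noindent\textbf{Proof plan.} The plan is to realize $\mathcal{H}_-$ concretely as a weighted $\ell^2$--space in the basis $(v_n)$, then to bound the norm of a typical element of $\mathcal{H}_\lambda$ in this space using Claim~1 from the proof of Theorem~\ref{main-abstract}. Since $S=T^{-1}$ is diagonal in the orthonormal basis $(v_n)$ with eigenvalues $\omega(n)$, the inner product $\langle x,y\rangle_-=\langle Sx,Sy\rangle$ takes, for $x=\sum_n x_n v_n\in\mathcal{H}$, the form $\|x\|_-^2=\sum_n \omega(n)^2|x_n|^2$. Completing, we may identify $\mathcal{H}_-$ with the Hilbert space of formal sums $\sum_n c_n v_n$ with $\sum_n\omega(n)^2|c_n|^2<\infty$, and the dual pairing extends $\mathcal{H}$-inner product so that the Fourier-like coefficient $c_n$ of $\varphi$ is given by $\overline{(\varphi,v_n)}$. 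Consequently, an element $\varphi$ of the algebraic dual of $\mathcal{D}$ (which contains each $v_n$) lies in $\mathcal{H}_-$ if and only if
$$\|\varphi\|_-^2:=\sum_{n\in\N}\omega(n)^2|(\varphi,v_n)|^2<\infty.$$

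Next I would take an arbitrary $\varphi=\sum_{j\in J} a_j\varphi_j(\lambda)\in\mathcal{H}_\lambda$ with $(a_j)\in\ell^2(J)$, using the description of $\mathcal{H}_\lambda$ from the proof of Theorem~\ref{main-abstract}. Since each $v_n$ lies in $\mathcal{D}\subset D(T)=\mathcal{H}_+$, property (a) of the PSW representation gives $\langle \varphi_j(\lambda),v_n\rangle_d=(U_jv_n)(\lambda)=e_{v_n}(\lambda)(j)$. Hence
$$(\varphi,v_n)=\sum_{j\in J}\overline{a_j}\,e_{v_n}(\lambda)(j),$$
and Cauchy--Schwarz in $\ell^2(J)$ yields $|(\varphi,v_n)|^2\le \|a\|_{\ell^2(J)}^2\,\|e_{v_n}(\lambda)\|_{\ell^2(J)}^2$.

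The crucial step is to control $\sum_n\omega(n)^2\|e_{v_n}(\lambda)\|_{\ell^2(J)}^2$ for $\mu$-a.e.\ $\lambda$. Applying Claim~1 from the proof of Theorem~\ref{main-abstract} to each $v_n\in\mathcal{H}_+$ gives
$$\int_\varSigma \|e_{v_n}(\lambda)\|_{\ell^2(J)}^2\,d\mu(\lambda)=\|v_n\|^2=1.$$
Tonelli's theorem, together with $\sum_n\omega(n)^2\le 1$, then yields
$$\int_\varSigma \sum_{n\in\N}\omega(n)^2\|e_{v_n}(\lambda)\|_{\ell^2(J)}^2\,d\mu(\lambda)=\sum_{n\in\N}\omega(n)^2\le 1.$$
In particular $\sum_n\omega(n)^2\|e_{v_n}(\lambda)\|^2<\infty$ for $\mu$-a.e.\ $\lambda\in\varSigma$, and for any such $\lambda$ and any $\varphi\in\mathcal{H}_\lambda$ we obtain
$$\|\varphi\|_-^2\le \|a\|_{\ell^2(J)}^2\sum_{n\in\N}\omega(n)^2\|e_{v_n}(\lambda)\|_{\ell^2(J)}^2<\infty,$$
giving $\varphi\in\mathcal{H}_-$ and proving the claim.

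The main obstacle I anticipate is the first, essentially bookkeeping step: carefully checking that the identification of $\mathcal{H}_-$ with a weighted $\ell^2$-space is compatible with the sesquilinear dual pairing convention adopted in the paper (recall that the scalar multiplication on the algebraic dual is defined with a complex conjugate). Once this identification is in place and the inclusion $v_n\in\mathcal{D}\cap D(T)$ is verified (so Claim~1 applies), the rest is a straightforward Fubini-type argument.
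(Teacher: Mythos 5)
Your argument is correct and is essentially the paper's own proof: both identify $\mathcal{H}_-$ with the weighted $\ell^2$-space $\{\sum_n c_n v_n : \sum_n \omega(n)^2|c_n|^2<\infty\}$, apply Claim~1 to each $v_n$, sum against the weights $\omega(n)^2$ and use Tonelli to get $\sum_{n,j}|\omega(n)\langle\varphi_j(\lambda),v_n\rangle_d|^2<\infty$ for $\mu$-a.e.\ $\lambda$, and then conclude by Cauchy--Schwarz in $\ell^2(J)$ for an arbitrary $\varphi=\sum_j a_j\varphi_j(\lambda)$. No gaps; the bookkeeping you flag (the conjugate-linear convention and $v_n\in\mathcal{D}\subset D(T)$) is handled exactly as you anticipate.
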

\begin{proof}Via the orthonormal basis $v_n, n\in\NN$, it is possible to identify $\mathcal{H}$ with $\ell^2 =\ell^2 (\N)$. Then, $\mathcal{H}_+$ becomes $\{ f\in \ell^2 : \sum_n \frac{|f(n)|^2}{|\omega(n)|^2} < \infty\}$ and $\mathcal{H}_-$ becomes
$$\{f : \N\longrightarrow \C : \sum_n |\omega (n)|^2 |f(n)|^2 < \infty \}.$$
In this sense $\mathcal{H}_-$ consists of all those vectors with an expansion of the form $\sum f(n) v_n$ with $\sum_n |f(n)| |\omega (n)|^2 < \infty$.


For each of the elements $v_n$ of the orthonormal basis we have
$$ 1 = \|v_n\|^2 = \int_\varSigma \sum_j |\langle \varphi_j (\lambda),v_n\rangle|^2 d\mu.$$
This gives
$$1 \geq \sum_n |\omega (n)|^2 \|v_n\|^2  = \int_{\varSigma} \sum_{j,n} |\omega (n) \langle \varphi_j (\lambda), v_n\rangle_d|^2 d\mu (\lambda)$$
and we conclude that
$$ \sum_{j,n} |\omega (n) \langle \varphi_j (\lambda), v_n\rangle_d|^2 < \infty$$
for $\mu$-almost every $\lambda$. For each such $\lambda$ we then infer from Cauchy-Schwartz inequality
$$\sum_{n} |\omega (n)|^2 |\sum_j c_j \langle \varphi_j (\lambda), v_n\rangle_d|^2 \leq \sum_k |c_k|^2 \sum_{n,j} |\omega (n) \langle \varphi_j (\lambda), v_n\rangle_d|^2 < \infty$$
for any square summable sequence $(c_j)$. This directly  gives the desired statement by the discussion of the beginning of the proof.
\end{proof}

\section{Metric measure spaces and finer growth  properties} \label{Metric}

The proof of the  main abstract result has provided us with Hilbert spaces of generalized eigenfunctions together with   orthonormal bases  $\varphi_j (\lambda)$, $j\in J$ for each $\lambda$ in the spectrum.
In this section we study specific regularity features, viz  growth  properties and local regularity,  of these functions  $\varphi_j (\lambda)$.
Note that  these functions directly arise   from  the   the main result of  \cite{PSW} (see discussion at the end of Section \ref{general}).
 Therefore, all studies of regularity  properties of generalized eigenfunctions  based on \cite{PSW} can be directly applied in our setting. In this section, we discuss how some of the abstract ingredients of \cite{BdMS} can be applied and generalized. For the convenience of the reader and as regularity  is a crucial feature we present rather complete arguments.


Throughout this section we assume that
our separable Hilbert space is given as $\mathcal{H} =  L^2 (X,m)$
with a suitable measure space $(X,m)$.


\begin{theorem} \label{thm-bound}
Let $L$ be  a selfadjoint  operator on $L^2 (X,m)$ with spectrum $\varSigma$. Let $\gamma : \R\longrightarrow \R$ be continuous and bounded  with $|\gamma|>0$ on $\Sigma$ and  assume that the range of $\gamma (L)$ is contained in $L^\infty (X,m)$.
Let $w:X\to[1,\infty)$  be an arbitrary function such that  $w^{-1}\in L^2 (X,m)$. Then, there exists an expansion in generalized eigenfunctions such that for $\mu$-almost every $\lambda$ the basis  elements  $(\varphi_j (\lambda))_j$ consist of functions on $X$   satisfying
$$ w^{-1} \varphi_j (\lambda)\in L^2 (X,m)$$
for all $j\in J$.

Moreover, for any $f \in \mathcal{H}_+ = \{ f: f=w^{-1}g,\ g\in L^2
(X,m) \}  $,  we have  an analogue of the Fourier transform
 \begin{equation}
    \label{eq-star2}
    \langle \varphi_j (\lambda), f \rangle_d =
    \int_X  \varphi_j (\lambda)(x) f(x)      dm (x),
\end{equation}
which is an ordinary Lebesgue integral (and not an abstract
duality),  and  the inverse Fourier transform
\begin{equation*}
f(x)=\lim_{n\to N, E\to \infty} \sum\limits_{j=1}^n
\int\limits_{M_j\cap [-E,E]}
\langle \varphi_j (\lambda), f \rangle_d \;
\varphi_{j} (\lambda)(x) d\mu(\lambda).
\end{equation*}

In addition, we have a direct analogue of the Plancherel  formula
\begin{equation}
    \label{eq-star}
    \int_\varSigma \sum_{j\in J}   |
    \langle \varphi_j (\lambda), f \rangle_d
    |^2 d\mu (\lambda) = \|f\|^2 < \infty .
\end{equation}
\end{theorem}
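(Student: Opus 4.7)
The plan is to specialize the expansion apparatus of Section~\ref{general} to the choice $T := M_w$, multiplication by the weight $w$. Since $w \geq 1$, $T$ is selfadjoint with $T \geq 1$ and $T^{-1} = M_{1/w}$ is bounded on $L^2(X,m)$. The auxiliary spaces from \eqref{eq-h-p} then admit very concrete descriptions: $\mathcal{H}_+ = D(M_w) = \{f \in L^2(X,m) : wf \in L^2(X,m)\} = \{w^{-1}g : g \in L^2(X,m)\}$, while $\mathcal{H}_-$, defined as the completion of $L^2(X,m)$ in the norm $\|\xi\|_- = \|w^{-1}\xi\|_2$, is isometrically identified with the space of measurable functions $\xi$ on $X$ satisfying $w^{-1}\xi \in L^2(X,m)$; density of $L^2(X,m)$ inside this space follows by truncating on sublevel sets $\{w \leq n\}$. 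Crucially, every element of $\mathcal{H}_-$ is then a genuine measurable function on $X$.

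The decisive step will be to verify that $\gamma(L)\, T^{-1}$ is Hilbert--Schmidt, which is the ingredient needed to invoke the results of \cite{PSW} underlying Section~\ref{general}. The closed graph theorem, applied to $\gamma(L) : L^2 \to L^\infty$ (whose graph is closed by passing to a.e.-convergent subsequences of an $L^2$-convergent sequence), produces a constant $C$ with $\|\gamma(L) f\|_\infty \leq C \|f\|_2$ for every $f \in L^2(X,m)$. Consequently, for almost every $x$ the functional $f \mapsto (\gamma(L)f)(x)$ is continuous on $L^2(X,m)$ with norm at most $C$, and Riesz representation provides $k_x \in L^2(X,m)$ with $\|k_x\|_2 \leq C$ and $(\gamma(L)f)(x) = \langle k_x, f\rangle$; a measurable selection yields a jointly measurable kernel $K$ for $\gamma(L)$. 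Since $\gamma$ is real, $\gamma(L)$ is selfadjoint, so the bound $\int |K(x,y)|^2\,dm(x) \leq C^2$ also holds for a.e.\ $y$, and Fubini gives
\begin{equation*}
\|\gamma(L)\, M_{1/w}\|_{\mathrm{HS}}^2 \;=\; \int\!\!\int \frac{|K(x,y)|^2}{w(y)^2}\, dm(x)\, dm(y) \;\leq\; C^2 \|w^{-1}\|_2^2 \;<\;\infty.
\end{equation*}

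With the Hilbert--Schmidt condition in place, the apparatus of Section~\ref{general} produces measurable families $\varphi_j : M_j \to \mathcal{H}_-$ satisfying properties (a)--(c). Under the concrete identification of $\mathcal{H}_-$, each $\varphi_j(\lambda)$ is a measurable function on $X$ with $w^{-1}\varphi_j(\lambda) \in L^2(X,m)$, which is the first assertion. For $f \in \mathcal{H}_+$, writing $f = w^{-1}g$ with $g \in L^2(X,m)$, Cauchy--Schwarz shows $\overline{\varphi_j(\lambda)}\, f = (w^{-1}\overline{\varphi_j(\lambda)})\, g \in L^1(X,m)$, and the abstract dual pairing $\langle\cdot,\cdot\rangle_d$ coincides with the honest Lebesgue integral in \eqref{eq-star2}. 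The inversion formula and the Plancherel identity \eqref{eq-star} are then direct transcriptions of properties (b) and (a) of the PSW expansion, once the abstract duality there has been replaced by the explicit Lebesgue integral.

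The main obstacle is the Hilbert--Schmidt estimate: extracting a measurable integral kernel of $\gamma(L)$ from the abstract $L^2 \to L^\infty$ bound, and then combining selfadjointness with $w^{-1} \in L^2(X,m)$ to control the weighted Hilbert--Schmidt norm. Everything else amounts to bookkeeping: concretizing $\mathcal{H}_\pm$ as $w$-weighted $L^2$-type spaces, and reinterpreting the abstract dual pairing as a Lebesgue integral once both factors have been placed in the appropriate weighted $L^2$-spaces.
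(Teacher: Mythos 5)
Your proposal is correct and follows the paper's proof in all essentials: the same choice $T=M_w$, $S=T^{-1}=M_{1/w}$, reduction of the Hilbert--Schmidt condition to the $L^2\to L^\infty$ mapping property of $\gamma(L)$, and then deducing \eqref{eq-star2}, the inversion formula and \eqref{eq-star} from properties (a)--(b) and Claim~1 in the proof of Theorem~\ref{main-abstract}, after identifying $\mathcal{H}_-$ with the weighted space $\{\xi : w^{-1}\xi\in L^2(X,m)\}$. The only point of divergence is that where the paper simply cites the Grothendieck factorization theorem (following \cite{BdMS}, \cite{Stol}) to conclude that $\gamma(L)M_{1/w}$ is Hilbert--Schmidt, you prove the needed special case from scratch via the closed graph theorem, the Riesz/Dunford--Pettis kernel representation of a bounded map $L^2\to L^\infty$, selfadjointness of $\gamma(L)$ to transfer the bound $\int|K(x,y)|^2\,dm\le C^2$ to the $y$-variable, and Fubini; this is a valid and more self-contained treatment of the one nontrivial ingredient (and could be streamlined further by bounding $\|M_{1/w}\gamma(L)\|_{\mathrm{HS}}$ directly and passing to adjoints, avoiding the Hermitian-kernel step).
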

\begin{proof} Let $T$ be  the operator of multiplication by $w$, and $S = T^{-1}$  the operator of multiplication by $w^{-1}$. Then
 $\gamma (L)  S$ is a Hilbert-Schmidt operator because of the
  Grothendieck factorization theorem, as $\gamma (L) $ maps $L^2$  into $L^\infty$ and multiplication by $w^{-1}$ maps $L^\infty$ into $L^2$.
The remaining statements are now a direct consequence of  Theorem~\ref{main-abstract} and its proof. In particular,  equation \eqref{eq-star}  follows from our Claim~1 in the proof of  Theorem~\ref{main-abstract}.
\end{proof}

\noindent\textbf{Remarks.}
(a)  Often the main  interest lies in  situations where the semigroup $e^{-t L}$, $t>0$, maps $L^2$ into $L^\infty$.
  This is studied under the heading  of ultracontractivity, see \cite{Dav,Sim,SV} for further discussion and references,
  mostly dealing with Schr\"odinger operators  or, more generally, suitable perturbations of Dirichlet forms.
  Of course, in such a situation we may take $\gamma : [0,\infty)\longrightarrow \RR$ to be $\gamma (s) = e^{-t s}$ for any $t>0$.

(b) The first two parts  of the previous  theorem are  essentially a slight reformulation of   main abstract ingredients of \cite{BdMS} (which we place in a somewhat more general context of operators on $L^2 (X,m)$, rather than Dirichlet forms). The proof via Grothendieck factorization is taken from \cite{BdMS} (which  in turn is inspired by  \cite{Stol}).

 \medskip

The main emphasis of \cite{BdMS} is on local regular
 Dirichlet forms that satisfy certain subexponential volume growth conditions.
 However it is noted in \cite{BdMS} that these conditions can be separated.
 In particular,
 Theorem~\ref{thm-growth} shows that  subexponential (or other) volume growth conditions
 can be proved for generalized eigenfunctions without assuming that we have a Dirichlet form.

For Theorem~\ref{thm-growth}
we
will  make the further assumption that  $X$ is a metric space with a metric $\varrho$ (and the $\sigma$-algebra generated by the topology).  We then refer to $(X,m,\varrho)$ as a metric measure space.  The closed ball around a point $x\in X$ with radius $R$ will be denoted by $B(x,R)$.
In this context, we can study  subexponentially bounded
eigenfunctions defined as follows.

\begin{definition}
Let $(X,m,\varrho)$ be a metric measure space. A function $f$ on
$(X,m,\varrho)$ is said to be subexponentially bounded in $ L^2$
sense if for some $x_0\in X$ and  $\omega (x) = \varrho (x_0,x)$ the
function $e^{-\alpha \omega} f$ belongs to  $ L^2
  (X,m)$ for any $\alpha >0$.
\end{definition}




The abstract core of the corresponding argument of \cite{BdMS}  then gives the following.

\begin{theorem}\label{thm-growth}
Let $(X,m,\varrho)$ be a metric measure space. Let $L$ be  a
selfadjoint  operator on $L^2 (X,m)$ with spectrum $\varSigma$. Let
$\gamma : \R\longrightarrow \R$ be continuous and bounded  with
$|\gamma|>0$ on $\Sigma$.   Assume that the following holds:
\begin{itemize}

\item The range of $\gamma (L)$ is contained in $L^\infty (X,m)$.

\item For all $x\in X$, $R>0$ we have $m(B(x,R))<\infty$, and  $\lim\limits_{R\to\infty}e^{-\alpha\cdot R}m(B(x,R))= 0$   for any $\alpha>0$.

\end{itemize}
Then, there exists an expansion in generalized eigenfunctions such that for $\mu$-almost every $\lambda$ the basis  elements  $(\varphi_j (\lambda))_j$ consist  of subexponentially bounded functions  on $X$.
\end{theorem}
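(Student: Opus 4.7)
The plan is to reduce Theorem~\ref{thm-growth} to Theorem~\ref{thm-bound} by constructing a radial weight that exploits the subexponential volume growth. Fix $x_0\in X$, set $\omega(x):=\varrho(x_0,x)$ and $V(R):=m(B(x_0,R))$. By hypothesis $V(R)<\infty$ for every $R$ and $e^{-\alpha R}V(R)\to 0$ as $R\to\infty$ for every $\alpha>0$. I would seek a measurable function $w:X\to[1,\infty)$ satisfying both (i) $w^{-1}\in L^2(X,m)$, so that Theorem~\ref{thm-bound} applies, and (ii) for every $\alpha>0$ there exists $C_\alpha<\infty$ with $w(x)\le C_\alpha e^{\alpha\omega(x)}$ for all $x\in X$.

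Granting such a $w$, Theorem~\ref{thm-bound} furnishes an expansion $(\varphi_j(\lambda))_{j\in J}$ of generalized eigenfunctions with $w^{-1}\varphi_j(\lambda)\in L^2(X,m)$ for every $j\in J$ and $\mu$-almost every $\lambda\in\varSigma$. Property (ii) then yields the pointwise estimate $|e^{-\alpha\omega(x)}\varphi_j(\lambda)(x)|\le C_\alpha|w(x)^{-1}\varphi_j(\lambda)(x)|$ on $X$, so $e^{-\alpha\omega}\varphi_j(\lambda)\in L^2(X,m)$ for every $\alpha>0$ and every $j$. This is precisely the claim that each $\varphi_j(\lambda)$ is subexponentially bounded in the $L^2$ sense.

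For the construction of $w$, I would take the radial step function $w(x):=f(\omega(x))$, where $f(R):=\max\bigl(1,\,(n+1)\,V(n+1)^{1/2}\bigr)$ for $R\in[n,n+1)$. Decomposing $X$ into the annuli $A_n:=\{n\le\omega<n+1\}$ and using $m(A_n)\le V(n+1)$ gives
\begin{equation*}
\int_X w^{-2}\,dm \;\le\; \sum_{n\ge 0} f(n)^{-2}m(A_n) \;\le\; \sum_{n\ge 0}(n+1)^{-2} \;<\;\infty,
\end{equation*}
establishing (i). For (ii), given $\alpha>0$ one applies the subexponential hypothesis with parameter $\alpha/4$ to bound $V(n+1)^{1/2}$ by a multiple of $e^{\alpha(n+1)/8}$, and absorbs the polynomial factor $(n+1)$ into a further $e^{\alpha(n+1)/8}$ (valid for all large $n$), obtaining $f(R)\le C_\alpha e^{\alpha R}$ for all sufficiently large $R$; the finitely many small-$R$ values are absorbed by enlarging $C_\alpha$.

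The main obstacle is the simultaneous compatibility of (i) and (ii): (i) forces $w$ to grow roughly like $V^{1/2}$, while (ii) forbids exponential growth. These requirements are reconciled precisely by the subexponential volume growth assumption, via the explicit construction above. With $w$ in hand, the remainder is a direct invocation of Theorem~\ref{thm-bound}.
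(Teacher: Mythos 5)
Your proposal is correct, and it follows the same basic strategy as the paper: reduce to Theorem~\ref{thm-bound} via a radial weight whose square-integrable reciprocal is guaranteed by the subexponential volume growth. The implementation differs in one genuine way. The paper applies Theorem~\ref{thm-bound} separately for each weight $w_\alpha = e^{\alpha\omega}$ (noting $e^{-\alpha\omega}\in L^2(X,m)$ by the volume hypothesis) and then intersects the resulting full-measure sets of $\lambda$ over a countable dense set of $\alpha$'s, using monotonicity in $\alpha$. You instead build a \emph{single} weight $w$ on dyadic-style annuli, with $w\asymp (n+1)V(n+1)^{1/2}$ on $\{n\le\omega<n+1\}$, so that $w^{-1}\in L^2$ and simultaneously $w\le C_\alpha e^{\alpha\omega}$ for every $\alpha>0$; one application of Theorem~\ref{thm-bound} then yields all $\alpha$ at once by pointwise domination. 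Your version is slightly more work up front (the annulus construction and the verification of both properties of $w$, which you carry out correctly), but it buys something real: since Theorem~\ref{thm-bound} only asserts the \emph{existence} of an expansion for a given $w$, the paper's argument implicitly relies on the fact that the basis elements $\varphi_j(\lambda)$ do not depend on the choice of $w_\alpha$ (they come from a fixed ordered spectral representation, with $w$ entering only through the Hilbert--Schmidt condition); your single-weight argument sidesteps that compatibility issue entirely. Both proofs are valid.
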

\begin{proof} Fix $x_0\in X$ and define $\omega (y) := \varrho (x_0,y)$.  By the second assumption the functions $w^{-1}_\alpha  = e^{- \alpha \omega}$ belong to $L^2 (X,m)$ for any $\alpha >0$. Then, the first assumption and the first result of this section, Theorem~\ref{thm-bound},  give that for $\mu$-almost every $\lambda$ the function  $w^{-1}_\alpha \varphi_j (\lambda)$ belongs to $L^2 (X,m)$ for  every $j\in J$.  Appealing to a countable dense subset of $\alpha$'s in $(0,\infty)$ we now obtain  the statement.
\end{proof}

\noindent\textbf{Remarks.} (a)  Under suitable assumptions it is possible to show a converse to the previous  theorem i.e. every $\lambda$ admitting an subexponentially bounded generalized eigenfunction must then belong to the spectrum of $L$. This type of result is known as Shnol theorem (see \cite{BdMLS, FLW, LSS,LSV} for recent   results of this type for operators arising from Dirichlet forms and further references).

(b) Note that  the first   theorem of this section  can be slightly generalized: If there is a measurable  bounded non-vanishing  function $b$ on $X$  and $b \gamma (L)$ maps into $L^\infty$, then with the  operator $T$ of multiplication by  $w b^{-1}$ we have
 $S =  w^{-1} b$ and $S \gamma (L)$ is Hilbert-Schmidt. Hence, the statement of Theorem \ref{thm-bound} (and its proof) carry over with  $w$ replaced by $w b^{-1}$. This then yields a corresponding version of the preceding theorem as well.

(c) The above considerations do not  give growth restrictions on all generalized eigenfunctions in  the $\mathcal{H}_\lambda$ but only on a basis of this space. This may  be considered a weakness.  However, there is no reason in general why all elements in these spaces should satisfy strong  growth restrictions (note that this spaces can be infinite-dimensional, such as in \cite{T98}).

(d) One can apply the same methods to other (e.g. polynomial)
growth conditions on the measure $m(B(x,R))$ of the balls
(this remark is due to Peter Kuchment, as an extension of the classical ideas for periodic operators and quantum graphs, see \cite{BK,K93}).

\medskip

Theorem~\ref{thm-growth} can be compared and contrasted with
 Corollary~\ref{cor-m}, which describes `local' properties of generalized eigenfunctions (e.g. for local Dirichlet forms) without explicitly assuming volume growth conditions. We are interested in these questions because the framework of Theorem~\ref{thm-bound} fits well in the general ideas of   \cite{St89,St12}  (see also
 \cite[(1.1)-(1.2) and (3.1)-(3.5)]{ST} and \cite{DRS,R12,RST}), which apply to an wide array spaces of exponential growth such as hyperbolic groups, symmetric spaces   and fractafolds.

 In particular, a natural question to ask  is whether in some sense
\begin{equation}
    L \varphi_j (\lambda) =  \lambda \varphi_j   (\lambda)
\end{equation}
 but, since $\varphi_j (\lambda)(x)$ are only locally square integrable, this question is not well posed
 in terms of the usual spectral theory of self-adjoint operators. However, it is   natural to discuss this question
 if  $(X,m,\varrho)$ is a locally compact complete  metric measure space, and
$m$ is a Radon measure, which implies in particular that $m(B(x,R))<\infty$, without specific volume growth conditions.
Furthermore,
 it is natural to assume that
$L$ is the generator of a regular strongly local Dirichlet form $(\ee,D(\ee))$ on  $L^2(X,m)$,
and that
the set  $\mathcal{D}$ contains a dense set, in $L^2(X,m)$ and in $C_0(X,\varrho)$, of  compactly supported continuous
functions  in the domain $D(\ee)$ of $\ee$. Note that such a set $\Dee$ always exist by the  standard theory of regular Dirichlet forms.
Also we can choose $w$ in Theorem~\ref{thm-bound} to be continuous and thus locally bounded.


In this situation    $f\in L^2_{loc}(X,m)$
is of locally finite energy, i.e. belongs to $\Dl(\ee)$, if for any ball
$B(x,R)$ there is $u\in D(\ee)$ which coincides with $f$ on $B(x,R)$.
Note that if $f\in L^2_{loc}(X,m)$ and $v\in\Dee$ then $\ee(f,v)$ is well defined as $\ee(f,v)=\ee(u,v)$
for any $u\in D(\ee)$ which coincides with $f$ on $B(x,R)$ provided that  $\text{supp}(v)\subset B(x,R)$.
 Furthermore,
we can use the following definitions.
\begin{definition} \label{df-loc}
\begin{enumerate}
\item
 We say that $f\in   \Dl(\ee)$ belongs to the weak local domain of $L$  if
there exists $g\in L^2_{loc}(X,m)$
such that $$\ee(f,v)=(g,v)_{L^2(X,m)}  $$ for any $v\in\Dee$.
In this case we will say that $f\in   \Dl(L)$ and $$Lf=g$$ in $  L^2_{loc}(X,m)$.
\item
We say that $f\in   \Dl(L)$ belongs to the strong local domain of $L$
if for any ball
$B(x,R)$ there is $u\in D(L)$
which coincides with $f$ on $B(x,R)$. Note that such an $f$ also belongs to the weak local domain of $L$, and so $Lf=g$ for some $ g\in L^2_{loc}(X,m)$.
\end{enumerate}\end{definition}
To the best of our knowledge these notions are not well studied in the context of  general Dirichlet forms. The weak domain of the Laplacian appears implicitly in \cite[Theorem 1(b)]{PSW} and \cite[Theorem 2.1(s)]{BdMS}. The strong domain of the Laplacian  is studied in detail
 in a more restricted
context of analysis on finitely ramified fractals, see \cite{BST,DRS,RST,St00t} and references therein.

The following theorem
holds for infinite Sierpinski fractafolds, which can be described as an at most countable union of
isometric
copies of the standard Sierpinski gasket such that
\begin{enumerate}
    \item these copies do not intersect except for corner points;
    \item each corner point is contained in at most two of these copies.
\end{enumerate}
One can see that every point of a Sierpinski fractafold has a neighborhood homeomorphic to a neighborhood in the standard
Sierpinski gasket, which explains the term `fractafold' introduced by Strichartz in  \cite{St99n,St03}, similarly to the notion of a manifold  where
 every point has a neighborhood homeomorphic to a neighborhood in the standard Euclidean space.

The standard Laplacian is uniquely defined on a   Sierpinski fractafold and is a local operator.
Moreover, according to \cite{RST},
there exits  a  set  $\Dee$ (with countable algebraic basis) which is dense set in $L^2(X,m)$ and in $C_0(X,\varrho)$  and consists of compactly supported functions in the domain of any positive power $L^n$ of $L$.

\begin{theorem}\label{thm-local}
If $L$ is the standard Laplacian on  a Sierpinski fractafold $X$, as defined in \cite{ST}, then
the generalized eigenfunctions $\varphi_j (\lambda)$
belongs to the strong local domain of $L$
for $\mu$-almost every $\lambda$. In particular, they have a continuous version for which the point-wise approximating  formula holds
$$
    L\varphi_j (\lambda) (x)= \lim\limits_ {n{\to}\infty} 5^ n
\Delta_n \varphi_j (\lambda)(x)=\lambda\varphi_j (\lambda) (x)
$$
where $\Delta_n $ is the standard graph Laplacian on the graphs $(V_n,E_n)$ approximating the Sierpinski gasket and $x$ is any vertex in these graphs, see Figure~\ref{fig-Vn}.
\end{theorem}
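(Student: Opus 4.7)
The plan is to combine the abstract weak eigenfunction statement of Corollary~\ref{cor-m} with the explicit local structure of the Sierpinski gasket, specifically with the fact that on a Sierpinski fractafold the standard Laplacian admits a rich enough local domain to solve the inhomogeneous equation $Lu=g$ on any compact subdomain. First I would fix the test space $\mathcal{D}$ to be the countable-basis subspace of compactly supported functions in $\bigcap_{n\ge 1}D(L^n)$ whose existence on a Sierpinski fractafold is guaranteed by \cite{RST}; this $\mathcal{D}$ is invariant under $L$, which is the decisive property. Apply Theorem~\ref{main-abstract} with this $\mathcal{D}$, and then apply Corollary~\ref{cor-m} to every $v\in\mathcal{D}$ (legitimate because $Lv\in\mathcal{D}$). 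Because $\mathcal{D}$ has a countable basis, we obtain outside a single $\mu$-null set the identity
$$(\varphi_j(\lambda),Lv) \;=\; \lambda\,(\varphi_j(\lambda),v) \qquad \text{for all } v\in\mathcal{D},\ j\in J.$$
By Theorem~\ref{thm-bound} (with $w$ chosen continuous and locally bounded, using ultracontractivity of the heat semigroup on the fractafold), these $\varphi_j(\lambda)$ are honest locally square integrable functions and the pairing $(\varphi_j(\lambda),v)$ is a genuine Lebesgue integral against compactly supported $v\in\mathcal{D}$.

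Next I would promote $\varphi_j(\lambda)$ to the weak local domain of $L$ in the sense of Definition~\ref{df-loc}. Fix a ball $B(x,R)$ and a cutoff $\chi\in\mathcal{D}$ with $\chi\equiv 1$ near $B(x,R)$. For any $v\in\mathcal{D}$ supported in $B(x,R)$ one can, by density of $\mathcal{D}$ in $D(\ee)$ and regularity of the Dirichlet form on the fractafold, justify the integration by parts
$$\ee(\varphi_j(\lambda),v) \;=\; \int_X \overline{\varphi_j(\lambda)}\,Lv\, dm \;=\; \lambda \int_X \overline{\varphi_j(\lambda)}\,v\, dm,$$
so $\varphi_j(\lambda)\in\Dl(\ee)$ and $L\varphi_j(\lambda)=\lambda\varphi_j(\lambda)$ in $L^2_{loc}$. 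This is the weak local domain statement.

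To upgrade to the strong local domain I would exploit the cell structure of the Sierpinski fractafold. Fix a ball $B(x,R)$ and choose a finite union $K$ of $n$-cells containing $B(x,R)$ with $n$ large. On each such cell the Kigami construction of $L$ provides explicit solution operators (Green's function and harmonic extensions) that take boundary data on $V_n\cap\partial K$ together with a prescribed Laplacian in $L^2$ and produce a function in the global domain $D(L)$. Using the boundary values of $\varphi_j(\lambda)$ at $V_n\cap\partial K$ (finite in number, so well defined from the continuous version) and $g=\lambda\varphi_j(\lambda)$ as the prescribed Laplacian on the interior, one constructs $u\in D(L)$ with $Lu=\lambda\varphi_j(\lambda)$ on $K$ and $u=\varphi_j(\lambda)$ on $V_n\cap\partial K$. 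The difference $\varphi_j(\lambda)-u$ is then in $\Dl(\ee)$, harmonic on the interior with vanishing boundary values on $\partial K$, hence identically zero there by uniqueness of the Dirichlet problem. Consequently $\varphi_j(\lambda)=u$ on $B(x,R)$, which is exactly the strong local domain property.

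Finally, continuity and the pointwise approximation formula follow from the standard analysis on the Sierpinski gasket: any function in $D(L)$ has a continuous representative and satisfies $Lu(x)=\lim_{n\to\infty}5^n\Delta_n u(x)$ at every $V_n$-vertex by Kigami's theorem, so the identity transfers to $\varphi_j(\lambda)$ locally, and hence globally, giving the displayed formula with right-hand side $\lambda\varphi_j(\lambda)(x)$. The main obstacle I expect is the upgrade from weak to strong local domain: one must verify that the explicit Sierpinski-gasket solution operators, applied cell-by-cell to the data $(\varphi_j(\lambda),\lambda\varphi_j(\lambda))$, produce an object globally in $D(L)$ rather than merely in $D(\ee)$. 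This is a matching condition across shared boundary vertices of adjacent cells that relies on the locality and self-similarity of the Kigami Laplacian together with the uniform control on boundary values provided by continuity.
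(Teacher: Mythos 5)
Your overall strategy is essentially the paper's: both arguments localize to a compact union of cells, solve $Lu=\lambda\varphi_j(\lambda)$ there via the Green's operator, reduce the remainder to a harmonic function, and finish by citing \cite{RST} for membership in the strong local domain and the pointwise $5^n\Delta_n$ formula. However, your write-up has two ordering problems that the paper's proof is specifically arranged to avoid. First, to pose your Dirichlet problem on $K$ you use the boundary values of $\varphi_j(\lambda)$ at $V_n\cap\partial K$, ``well defined from the continuous version''; but in your argument continuity is only obtained at the very end, as a consequence of $\varphi_j(\lambda)=u\in D(L)$ on $B(x,R)$, so this is circular: a priori $\varphi_j(\lambda)$ is only an $L^2_{loc}$ equivalence class with no point values. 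Second, the intermediate identity $\ee(\varphi_j(\lambda),v)=\int\overline{\varphi_j(\lambda)}\,Lv\,dm$ presupposes $\varphi_j(\lambda)\in\Dl(\ee)$, since otherwise the left-hand side is undefined in the sense of Definition~\ref{df-loc} and the surrounding discussion; membership in the local form domain does not follow from the dual-pairing eigenvalue identity of Corollary~\ref{cor-m} alone and is not something you may assert by ``density and regularity''.

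Both gaps are repaired by running the argument in the paper's order. On each compact fractafold piece $F_k$ set $f_k=\lambda G_k\bigl(\varphi_j(\lambda)|_{F_k}\bigr)$; this lies in $D(L_k)$ and is continuous because the Green's kernel $g_k(x,y)$ is continuous, with no need for point values of $\varphi_j(\lambda)$. Then $h_k=\varphi_j(\lambda)-f_k$ satisfies $(h_k,Lu)=0$ for all $u\in\Dee$ supported in the interior of $F_k$, and the Strichartz--Usher theorem \cite[Theorem 4.5]{SU} (a Weyl-lemma for the Kigami Laplacian) upgrades this weakly harmonic $L^2$ function to a continuous harmonic one. Only after this decomposition does one have a continuous version of $\varphi_j(\lambda)$, hence well-defined values at the finitely many boundary vertices, local finiteness of energy, and then the strong local domain property and the pointwise approximation formula from \cite{RST}, exactly as you intend. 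So your idea is the right one, but continuity must come from the hypoellipticity of weak harmonicity, not from a Dirichlet-problem construction that itself needs point values as input.
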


\begin{figure}
\centerline
{
\def\Tri#1#2#3#4{{
\put(#1,#2){\line(3,5){#3}}
\put(#1,#2){\line(3,0){#4}}
\count223=#1
\advance\count223 by #4
\put(\count223,#2){\line(-3,5){#3}}}}
\def\Trii#1#2#3#4#5{{
\put(#1,#2){\circle*{#5}}
\count223=#1
\advance\count223 by #4
\put(\count223,#2){\circle*{#5}}
\count226=#1
\advance\count226 by #3
\count224=#3
\divide\count224 by 3
\multiply\count224 by 5
\count225=#2
\advance\count225 by \count224
\put(\count226,\count225){\circle*{#5}}
}}
\def\Triii#1#2#3#4#5{{
\put(#1,#2){\circle{#5}}
\count223=#1
\advance\count223 by #4
\put(\count223,#2){\circle{#5}}
\count226=#1
\advance\count226 by #3
\count224=#3
\divide\count224 by 3
\multiply\count224 by 5
\count225=#2
\advance\count225 by \count224
\put(\count226,\count225){\circle{#5}}
}}
\def\Spic#1#2#3#4{{
\count205=#2\count206=#2\count207=#2\count208=#2\count209=#2
\count210=#1\count211=#3\count214=#3\count212=#4
\divide\count210 by 2
{\ifnum\count210>0
\Spic{\count210}{#2}{#3}{#4}
\multiply\count207 by 3
\multiply\count208 by 6
\multiply\count209 by 5
\multiply\count207 by \count210
\multiply\count208 by \count210
\multiply\count209 by \count210
{\advance\count211 by \count207
\advance\count212 by \count209
\Spic{\count210}{#2}{\count211}{\count212}}
{\advance\count214 by \count208
\Spic{\count210}{#2}{\count214}{#4}}
\else
\multiply\count205 by 3
\multiply\count206 by 6
\Tri{#3}{#4}{\count205}{\count206}
\fi
}}}
\def\Spicc#1#2#3#4#5{{
\count205=#2\count206=#2\count207=#2\count208=#2\count209=#2
\count210=#1\count211=#3\count214=#3\count212=#4
\divide\count210 by 2
{\ifnum\count210>0
\Spicc{\count210}{#2}{#3}{#4}{#5}
\multiply\count207 by 3
\multiply\count208 by 6
\multiply\count209 by 5
\multiply\count207 by \count210
\multiply\count208 by \count210
\multiply\count209 by \count210
{\advance\count211 by \count207
\advance\count212 by \count209
\Spicc{\count210}{#2}{\count211}{\count212}{#5}}
{\advance\count214 by \count208
\Spicc{\count210}{#2}{\count214}{#4}{#5}}
\else
\multiply\count205 by 3
\multiply\count206 by 6
\Trii{#3}{#4}{\count205}{\count206}{#5}
\fi
}}}
\def\Spiccc#1#2#3#4#5{{
\count205=#2\count206=#2\count207=#2\count208=#2\count209=#2
\count210=#1\count211=#3\count214=#3\count212=#4
\divide\count210 by 2
{\ifnum\count210>0
\Spiccc{\count210}{#2}{#3}{#4}{#5}
\multiply\count207 by 3
\multiply\count208 by 6
\multiply\count209 by 5
\multiply\count207 by \count210
\multiply\count208 by \count210
\multiply\count209 by \count210
{\advance\count211 by \count207
\advance\count212 by \count209
\Spiccc{\count210}{#2}{\count211}{\count212}{#5}}
{\advance\count214 by \count208
\Spiccc{\count210}{#2}{\count214}{#4}{#5}}
\else
\multiply\count205 by 3
\multiply\count206 by 6
\Triii{#3}{#4}{\count205}{\count206}{#5}
\fi
}}}
%
%
\begin{picture}(60,50)(0,0) \setlength{\unitlength}{0.5pt}\small\thinlines
\put(-20,70){$V_{0}:$}
{\def\LLL{6pt}
\setlength{\unitlength}{10pt}
\Spicc{1}{1}{0}{0}{.5}%
\Spic{1}{1}{0}{0}%
}
\end{picture}\ \ \
\ \ \
\ \ \
\begin{picture}(60,50)(0,0) \setlength{\unitlength}{0.5pt}\small\thinlines
\put(-20,70){$V_{1}:$}
{\def\LLL{6pt}
\setlength{\unitlength}{5pt}
\Spicc{2}{1}{0}{0}{.9}%
\Spic{2}{1}{0}{0}%
}
\end{picture}\ \ \
\ \ \
\ \ \
\begin{picture}(60,50)(0,0) \setlength{\unitlength}{0.5pt}\small\thinlines
\put(-20,70){$V_{2}:$}
{\def\LLL{6pt}
\setlength{\unitlength}{2.5pt}
\Spicc{4}{1}{0}{0}{1.5}%
\Spic{4}{1}{0}{0}%
}
\end{picture}%
}
\caption{Discrete graph approximations to the standard Sierpinski gasket.}\label{fig-Vn}

\end{figure}

\begin{proof}
The proof of this theorem, which we outline briefly, is based on  the
detailed technical local analysis of the functions in the domain of $L$
available in the cited literature, in particular in \cite{St06book}.
\begin{enumerate}
 \item  $X$ is a countable union of compact  fractafolds $F_k$, each having a finite boundary
 $\partial F_k$. In what follows we  fix $k$.
 \item  restriction of all functions to $F_k$  defines a Dirichlet form on $L^2(F_k,m)$ which we will denote by the same notation $\ee$.
 \item  there is a continuous Green's function $g_k(x,y)$ on $F_k$ which is the integral kernel of the
 Green's operator $G_k$. This operator is the inverse of the $L_k$, which is defined as the restriction of $L$ to the functions with support in $F_k$ or, alternatively, $L_k$ is the unique Dirichlet Laplacian on $F_k$.
 \item  fix $\lambda>0$ for which \eqref{eq-duaL} holds for all $u\in\Dee$ and define $f_k=\lambda G_k\left(\varphi_j (\lambda)\big|_{{\vphantom{f}}_{F_k}}\right)$. We have $L_kf_k=\lambda \varphi_j (\lambda)$ on $F_k$.

 \item  define $h_k=\left(\varphi_j (\lambda)-f_k\right)\big|_{{\vphantom{f}}_{F_k}}$ and observe that
 $$(h_k,Lu)=0$$ for any $u\in\Dee$ with support in the interior of $F_k$. This implies the key observation, which follows from \cite[Theorem 4.5]{SU}, that $h_k$ has a continuous version on $F_k$ which is a harmonic function   in the interior of $F_k$.

 \item  we have that $\varphi_j (\lambda)=f_k+h_k$ in  $L^2(F_k,m)$, which implies that
 $\varphi_j (\lambda)$ has a continuous version.

 \item  it follows from \cite{RST} that this version is in the strong local domain of $L$ and the point-wise approximating  formula.
 \end{enumerate}
\end{proof}

\noindent\textbf{Remark.} It is straightforward to generalize this theorem to the case of infinite finitely ramified cell structures, as defined in \cite{T08}, and it would be   interesting to understand under which assumptions it is true for general local regular Dirichlet forms, which will be subject of future work. The steps of the proof outlined above may be applicable for resistance forms of Kigami \cite{Ki1,Ki2} using the methods developed in \cite{BdMS,HKT2}.

\section{Getting back to discrete measure spaces - Proof of Theorem \ref{main-concrete}}\label{Getting}
In this section we consider a discrete measure space $(V,m)$ and  provide a proof of Theorem \ref{main-concrete}. To do so, we first discuss two  basic results on selfadjoint  operators on $\ell^2 (V,m)$. While the results are rather simple they may be of independent interest. In our context they will be needed to apply the abstract theorem to the concrete situation. Throughout this section we assume that we are given a discrete measure space $(V,m)$ as defined in the first section.

\medskip

We first  show that for locally finite selfadjoint operators on $\ell^2 (V,m)$  the two notions of $C_c (V)$-eigenfunction and of generalized eigenfunctions introduced previously agree. Here, we identify  the algebraic dual of $C_c (V)$ with $C(V)$ via the pairing $(\cdot,\cdot)_m$ introduced in Section \ref{Locally}. More specifically any $g \in C(V)$ produces a functional $C_c (V)\longrightarrow \CC$ via
$$(g,u)_m = \sum_{x\in V} \overline{g (x)} u(x) m(x)$$
for $u\in C_c (V)$.

\medskip

\begin{lemma}\label{characterization-generalized-ef} Let $L$ be a locally finite selfadjoint operator with kernel $l$. Then, the following assertions are equivalent for $\varphi \in C(V)$ and $\lambda\in \R$:
\begin{itemize}
\item[(i)] $\varphi$ is a generalized eigenfunction of $L$ to the eigenvalue $\lambda$.
\item[(ii)]  $\varphi$ is a $C_c (V)$-eigenfunction  of $L$ to the eigenvalue $\lambda$.
\end{itemize}
\end{lemma}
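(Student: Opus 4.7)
The plan is to regard the algebraic dual of $C_c(V)$ as $C(V)$ via the pairing $(g,u)_m$ and to observe that the test functions at our disposal include all Dirac masses $\delta_x$ (defined by $\delta_x(y)=1$ if $y=x$, and $0$ otherwise). Since $L$ is locally finite and $C_c(V)\subset D(L)$, the kernel $l$ is uniquely determined and $L|_{C_c(V)}=\widetilde L|_{C_c(V)}$; moreover, local finiteness together with the selfadjointness of $L$ on $\ell^2(V,m)$ forces the symmetry $\overline{l(x,y)}=l(y,x)$, so for every $x$ the function $\widetilde L\delta_x : y\mapsto l(y,x)m(x)$ is again in $C_c(V)$. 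Hence $\delta_x\in C_c(V)\cap D(L)$ with $L\delta_x\in C_c(V)$ is an admissible test function in the definition of $C_c(V)$-eigenfunction.

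For (i)$\Rightarrow$(ii), I would take any $u\in C_c(V)\cap D(L)$ with $Lu\in C_c(V)$ and compute $(\varphi,Lu)_m$ directly from the kernel. Since $u$ has finite support, all the double sums are finite sums, so Fubini is trivial, and I can swap the order of summation; rewriting $l(x,y)=\overline{l(y,x)}$ produces $\widetilde L\varphi$ inside a complex conjugate. Using $\widetilde L\varphi=\lambda\varphi$ and the fact that $\lambda$ is real then yields $(\varphi,Lu)_m=\lambda(\varphi,u)_m$.

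For (ii)$\Rightarrow$(i), I would specialize the identity $(\varphi,Lu)_m=\lambda(\varphi,u)_m$ to $u=\delta_x$ (whose admissibility is established above). A direct computation gives $(\varphi,\delta_x)_m=\overline{\varphi(x)}m(x)$ and $(\varphi,L\delta_x)_m=m(x)\,\overline{(\widetilde L\varphi)(x)}$, where again the symmetry of $l$ and the local finiteness are used to get a finite unambiguous sum. Equating the two sides and cancelling $m(x)>0$ (and taking complex conjugates, using $\lambda\in\R$) gives $(\widetilde L\varphi)(x)=\lambda\varphi(x)$ for every $x\in V$, i.e.\ $\varphi$ is a generalized eigenfunction.

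I expect no serious obstacle: the argument is essentially a bookkeeping exercise in interchanging finite sums plus the kernel symmetry forced by selfadjointness. The only subtle point worth isolating is the justification that $\widetilde L\delta_x\in C_c(V)$, because this is where the selfadjointness of $L$ (and not merely the local finiteness of the rows of $l$) is used to ensure that the columns of $l$ are also finitely supported.
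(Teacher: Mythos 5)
Your proposal is correct and follows essentially the same route as the paper: both reduce the equivalence to the duality identity $\sum_x \overline{(\widetilde L-\lambda)\varphi}(x)\,v(x)\,m(x)=\sum_y\overline{\varphi(y)}\,(L-\lambda)v(y)\,m(y)$ for $v\in C_c(V)$, obtained by swapping finite sums and using the kernel symmetry $\overline{l(x,y)}=l(y,x)$ forced by selfadjointness, and then test against the Dirac functions $\delta_x$. Your explicit verification that $L\delta_x\in C_c(V)$ (so that the $\delta_x$ are admissible test functions in the definition of $C_c(V)$-eigenfunction) is a point the paper leaves implicit, and is a welcome addition rather than a deviation.
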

\begin{proof} By local finiteness of $L$ we have $D(L) = C(V)$ and $\varphi$ is a generalized eigenfunction to $\lambda$ if and only if $(\widetilde{L} - \lambda ) \varphi = 0$ holds. On the other hand, by the dual pairing $(\cdot,\cdot)_m$ the function  $\varphi$  is a $C_c (V)$-eigenfunction to $\lambda$ if and only if
$$ \sum \overline{\varphi (x)} (L - \lambda) v (x) m(x) = 0$$
holds for all $v\in C_c (V)$. Now,
   a short calculation  shows that
   \begin{eqnarray*}
\sum_{x} \overline{ (\widetilde{L} - \lambda) w } (x)  v(x)   m(x) & = &  \sum_{x,y}  \overline{ (l(x,y) - \lambda) w(y)}  m(y)  v(x)   m(x)\\
 & = &   \sum_{y\in V} \overline{w (y)}  {(L - \lambda) v} (y)  m(y)
 \end{eqnarray*}
for all $w\in C(V)$ and $v\in C_c (V)$. Here, all sums are absolutely convergent. This easily gives the statement of the lemma.
\end{proof}

After this preparation we can now  provide a \textit{Proof of Theorem \ref{main-concrete}}.
We can choose $\mathcal{D} := C_c (V)$. This space has indeed a countable algebraic basis as  $V$ is  is countable. Then,
Lemma \ref{characterization-generalized-ef}  shows that any $\mathcal{D}$-eigenfunction of $L$ is indeed a generalized eigenfunction of $L$. Then, Theorem \ref{main-abstract} gives all statements of Theorem \ref{main-concrete} up to the last one. The last statement follows from   the considerations of Section \ref{growth}. This finishes the proof of the theorem.

\medskip

\noindent\textbf{Remark.}
 In order to be specific in the proof of Theorem \ref{main-concrete} one may make the following choices:   Let
$$\omega : V\longrightarrow (0,\infty),\: \mbox{with}\;\: \sum_{x\in V} \omega (x)^2 \leq 1$$
 be given  and $S  = M_\omega$ be the operator of multiplication by $\omega$.
 Then $S$ is a Hilbert-Schmidt operator (compare discussion in Section \ref{growth}).  Let
 $$T = M_{\frac{1}{\omega}}.$$
Then, $T$ is selfadjoint with $T\geq 1$ (as $\omega$ is real valued with $0< \omega \leq  1$) and
$$ T^{-1} = S.$$
The associated Hilbert spaces can be described explicitly as
$$\hil_+:= \hil_+(T) :=D(T) =\{ u : V\longrightarrow \CC : \sum_{x\in V} \frac{ |u(x)|^2}{\omega (x)^2} m(x)  < \infty\}$$
and
$$ \hil_- = \{ u :V\longrightarrow \CC : \sum \omega(x)^2 |u(x)|^2 m(x)  < \infty\}.$$
The corresponding dual pairing  between $\hil_-$ and $\hil_+$ is given by
$$\langle v, u \rangle_d = \sum_{x\in V}  \overline{v(x)}  u(x) m(x) $$
for $u\in\hil_+, v\in \hil_-$.
Note that $\hil_+$ is contained in $\ell^2 (V,m)$ as $0< \omega \leq 1$. Note also that $\langle u,v\rangle_d$ given as above is indeed well defined for $u\in \hil_+$ and $v\in \hil_-$ by the H\"older inequality.
Let now  $$\gamma : [0,\infty) \longrightarrow \R, s \mapsto ( s + i)^{-1} .$$
Then, $\gamma$ is obviously continuous and does not vanish on the positive half-axis.
Then, $ \gamma (L) T^{-1} = (L + i)^{ -1}  S $ is Hilbert-Schmidt as $S$ is Hilbert-Schmidt and $(L+ i)^{-1}$ is bounded.

\appendix

\section{Direct integrals and measurable families of Hilbert spaces}\label{Direct}
In this section we briefly recall some direct integral theory. For further details we refer to e.g.  \cite{Dix, Nie}.

\medskip

Let $(\varSigma, \mu)$ be a measure space.  A family  of Hilbert spaces $(\mathcal{H}_\lambda, \langle \cdot,\cdot\rangle_\lambda)$, $\lambda\in \varSigma$, together with a countable collection $(w_n)_{n\in \NN}$ of functions on $\varSigma$ with $w_n (\lambda)\in \mathcal{H}_\lambda$ for each $\lambda\in \varSigma$, is called measurable family over $(\varSigma,\mu)$ if the following two properties hold:
\begin{itemize}
\item For each $\lambda\in\varSigma$ the linear hull of $\{w_n (\lambda): n\in\NN\}$ is dense in $\mathcal{H}_\lambda$.

\item  For all $n,m\in\NN$, the function $\varSigma \longrightarrow \CC$, $\lambda\mapsto \langle w_n (\lambda), w_m (\lambda)\rangle_\lambda$, is measurable.
\end{itemize}
Given such a measurable family of Hilbert spaces  we call a function $w $ on $\varSigma$ with $w(\lambda)\in \mathcal{H}_\lambda$ for each $\lambda\in\varSigma$ measurable if
$$\varSigma \longrightarrow \CC, \;\lambda\mapsto \langle w (\lambda), w_n (\lambda)\rangle_\lambda,$$
is measurable for any $n\in \NN$.

Then, the vector space  $\mathcal{L}^2  (\varSigma, (\mathcal{H}_\lambda))$ consisting of all measurable functions $w$ on $\varSigma$ with $w(\lambda)\in \mathcal{H}_\lambda$ for each $\lambda\in \varSigma$
such that
$$\int_\varSigma \|w (\lambda)\|^2 d\mu (\lambda)< \infty $$
carries a semi-scalar product given by
$$\langle w, v\rangle := \int_\varSigma \langle w (\lambda),v(\lambda)\rangle_\lambda d\mu (\lambda).$$
The quotient of $\mathcal{L}^2  (\varSigma, (\mathcal{H}_\lambda))$  by the subspace $\mathcal{N}$ consisting of all elements $w$ with $\langle w, w\rangle = 0$ is a Hilbert space and denoted by $\int_\varSigma^\oplus \mathcal{H}_\lambda d\mu (\lambda)$.

Let   a direct integral Hilbert space  $ \mathcal{K} =\int_\varSigma^\oplus \mathcal{H}_\lambda d\mu (\lambda)$ be given. Then, a bounded  operator $A$ on $\mathcal{K}$ is called decomposable if there exists a family of bounded operators $A_\lambda: \mathcal{H}_\lambda\longrightarrow \mathcal{H}_\lambda$, $\lambda\in \varSigma$,
with
$$(A f) (\lambda) =  A_\lambda f(\lambda)$$
for $\mu$-almost every $\lambda$
for each  $f\in \mathcal{K}$. This is then written as
$$A = \int_\varSigma^\oplus A_\lambda d\mu (\lambda).$$
If for such an $A$ there exists an measurable function $F : \varSigma \longrightarrow \CC$ with
$$A_\lambda f =  F (\lambda)f\;\:\mbox{for all $\lambda\in \varSigma$ and $f\in \mathcal{H}_\lambda$}$$
one also writes $A = M_F$ (by a slight abuse of language).

If $A$ is a  (not necessarily bounded) selfadjoint operator on $\mathcal{K}$ and  $(A_\lambda)$ is a family of (not necessarily bounded) selfadjoint  operators on the respective $\mathcal{H}_\lambda$  we write
$$ A = \int_\varSigma^\oplus A_\lambda d\mu (\lambda)$$
if and only if
$$\Phi (A)  = \int_\varSigma^\oplus \Phi (A_\lambda) d\mu (\lambda)$$
for all bounded measurable $\Phi: \R\longrightarrow \CC$.

\def\arxiv#1{\href{http://arxiv.org/abs/#1}{arXiv:#1}}

\end{document}